\newtheorem{definition}{Definition}[section]
\newtheorem{theorem}[definition]{Theorem}
\newtheorem{lemma}[definition]{Lemma}
\theoremstyle{remark}
\newtheorem{remark}[definition]{Remark}
\numberwithin{equation}{section}
\title{Symmetry and Monotonicity of Positive Solutions to Schr\"{o}dinger Systems with Fractional $p$-Laplacian}
\author[a]{Lingwei Ma}
\author[b]{Zhenqiu Zhang\thanks{Corresponding author.}}
\affil[a]{School of Mathematical Sciences, Nankai University, Tianjin 300071, P.R. China}
\affil[b]{School of Mathematical Sciences and LPMC, Nankai University, Tianjin, 300071, P.R. China}
\date{\today}
\begin{document}
\maketitle
\footnotetext[1]{E-mail: 1120170026@mail.nankai.edu.cn (L. Ma), zqzhang@nankai.edu.cn (Z. Zhang).}
\begin{abstract}
In this paper, we first establish a narrow region principle and a decay at infinity theorem to extend the direct method of moving planes for general fractional $p$-Laplacian systems. By virtue of this method, we can investigate the qualitative properties of the following Schr\"{o}dinger system with fractional $p$-Laplacian
\begin{equation*}
\left\{\begin{array}{r@{\ \ }c@{\ \ }ll}
\left(-\Delta\right)_{p}^{s}u+au^{p-1}& =&f(u,v), \\[0.05cm]
\left(-\Delta\right)_{p}^{t}v+bv^{p-1}& =&g(u,v),
\end{array}\right.
\end{equation*}
where $0<s,\,t<1$ and $2<p<\infty$. We obtain the radial symmetry in the unit ball or the whole space $\mathbb{R}^{N}(N\geq2)$, the monotonicity in the parabolic domain and the nonexistence on the half space for positive solutions to the above system under some suitable conditions on $f$ and $g$, respectively.

Mathematics Subject classification (2010): 35R11; 35B06; 35A01.

Keywords: Fractional $p$-Laplacian; Schr\"{o}dinger systems; direct method of moving planes; radial symmetry; monotonicity, nonexistence. \\

\end{abstract}
%\renewcommand{\thepage}{\roman{page}}
%\setcounter{page}{1}

%%\tableofcontents

\section{Introduction.}\label{section1}

In this paper, we are concerned with the Schr\"{o}dinger system as follows
\begin{equation}\label{model1}
\left\{\begin{array}{r@{\ \ }c@{\ \ }ll}
\left(-\Delta\right)_{p}^{s}u+au^{p-1}& =&f(u,v), & \mbox{in}\ \ \Omega\,, \\[0.05cm]
\left(-\Delta\right)_{p}^{t}v+bv^{p-1}& =&g(u,v),& \mbox{in}\ \ \Omega\,, \\[0.05cm]
u > 0, && v>0, & \mbox{on}\ \ \Omega\,,
\end{array}\right.
\end{equation}
where the fractional $p$-Laplacian  $\left(-\Delta\right)_{p}^{s}$ and $\left(-\Delta\right)_{p}^{t}$ are the nonlinear nonlocal pseudo differential operators of the types
\begin{equation}\label{Frac-p-Laplacian-s}
(-\Delta)_p^s u(x) := C_{N,sp} PV \int_{\mathbb{R}^N} \frac{|u(x)-u(y)|^{p-2}[u(x)-u(y)]}{|x-y|^{N+sp}} dy
\end{equation}
 and
 \begin{equation}\label{Frac-p-Laplacian-t}
(-\Delta)_p^t u(x) := C_{N,tp} PV \int_{\mathbb{R}^N} \frac{|u(x)-u(y)|^{p-2}[u(x)-u(y)]}{|x-y|^{N+tp}} dy.
\end{equation}
 Here, $PV$ stands for the Cauchy principal value, $C_{N,sp}$ and $C_{N,tp}$ are normalization positive constants, $0<s,\,t<1$ and $2<p<\infty$. The coefficients $a$ and $b$ are positive constants when $\Omega$ is a unit ball or the whole space. While $\Omega$ is the half space or an unbounded parabolic domain defined by
\begin{equation*}
  \Omega:=\left\{x=(x',x_{N})\in\mathbb{R}^{N}\mid x_{N}>|x'|^{2},\,x'=(x_{1},x_{2},...,x_{N-1})\right\},
\end{equation*}
  $a=a(x')$ and $b=b(x')$ are the functions that do not depend on $x_{N}$ and have lower bounds in $\Omega$.
Let
$$ \mathcal{L}_{sp} := \{ u \in L^{p-1}_{loc} \mid \int_{\mathbb{R}^N} \frac{|1+u(x)|^{p-1}}{1+|x|^{N+sp}} d x < \infty \}$$
and
$$ \mathcal{L}_{tp} := \{ v \in L^{p-1}_{loc} \mid \int_{\mathbb{R}^N} \frac{|1+v(x)|^{p-1}}{1+|x|^{N+tp}} d x < \infty \},$$
 we assume that
$$u \in C^{1,1}_{loc} \cap \mathcal{L}_{sp} \quad \mbox{and} \quad v \in C^{1,1}_{loc} \cap \mathcal{L}_{tp},$$ which are necessary to
guarantee the integrability of \eqref{Frac-p-Laplacian-s} and \eqref{Frac-p-Laplacian-t}. Obviously, for $p=2$
the fractional $p$-Laplacian coincides with the fractional Laplace operator, which is of particular interest in fractional quantum
mechanics for the study of particles on stochastic fields modelled by L\'{e}vy processes.
With respect to $p\neq2$, the nonlinear and nonlocal fractional $p$-Laplacian also arises in some important applications such as the non-local "Tug-of-War" game (cf. \cite{BCF1, BCF2}). In particular, Laskin \cite{La1, La2} originally proposed the fractional Schr\"{o}dinger equation that provides us with a
general point of view on the relationship between the statistical properties of the quantum mechanical path and the
structure of the fundamental equations of quantum mechanics.

During the last decade the elliptic equations and systems with
fractional Laplacian $\left(-\Delta\right)^{s}$ have enjoyed a
growing attention. To overcome the difficulty caused by the
non-locality of the fractional Laplacian, Caffarelli and Silvestre
\cite{CaSi} introduced an extension method to reduce the nonlocal
problem into a local one in higher dimensions. This method has been
applied successfully to investigate the equations with
$\left(-\Delta\right)^{s}$, a great number of related problems have
been studied extensively from then on (cf. \cite{BCPS, CZ} and the
references therein). Another effective method to handle the higher
order fractional Laplacian is the method of moving planes in
integral forms, which turns a given pseudo differential equations
into their equivalent integral equations, we refer \cite{CLO1,
CLO, CFY, Ma} for details. However, we always need to assume
$\frac{1}{2}\leq s<1$ or impose additional integrability conditions
on the solutions by using the extension method or the integral
equations method. Meanwhile, the aforementioned methods are not
applicable to other nonlinear nonlocal operators, such as the fully
nonlinear nonlocal operator and fractional $p$-Laplacian ($p\neq2$).
Recently, Chen et al. \cite{CLL} developed a direct method of moving
planes which can conquer these difficulties. Later a lot of articles
have been devoted to the investigation of various equations and
systems with fractional Laplacian by virtue of this direct method.
Among them, it is worth mentioning some works on generalizing the
direct method of moving planes to the fractional Laplacian system
(cf. \cite{LiuMa}) and the Schr\"{o}dinger system with  fractional
Laplacian (cf. \cite{Li}, \cite{QY}).

Afterwards, Chen et al. \cite{CLLg} extended this direct
method to consider the following fully nonlinear nonlocal equation
\begin{equation*}
  F_{\alpha}\left(u(x)\right):=C_{N,\alpha}\,PV\int_{\mathbb{R}^{N}}\frac{G\left(u(x)-u(y)\right)}{\left|x-y\right|^{N+\alpha}}dy=f\left(x,u\right),
\end{equation*}
where $G$ is a local Lipschitz continuous function, and the
operator $F_{\alpha}$ is non-degenerate in the sense that
\begin{equation}\label{G'}
G'(w) \geq c > 0.
\end{equation}
 Note that $F_{\alpha}$ becomes the fractional
Laplacian when $G(\cdot)$ is an identity map.

Indeed, the fractional
$p$-Laplacian we considered in this paper is a particular case of
the nonlinear nonlocal operator $F_{\alpha}(\cdot)$ for
$$\alpha= sp \; \mbox{ and } G(w) = |w|^{p-2}w,$$
which is degenerate if $p>2$ or singular if $p<2$ where the $w$ vanishes.
For simplicity, we will adopt this notation $G(\cdot)$ to denote the fractional $p$-Laplacian in what follows.
In this case,
$G'(w) = (p-1)|w|^{p-2}\geq0$, we have
$$ G'(w) \rightarrow \left\{\begin{array}{ll} 0, &  p > 2, \\
\infty, &  1<p<2,
\end{array}
\right.
$$
 as $ w \rightarrow 0$. It indicates that \eqref{G'} is not satisfied for the fractional
$p$-Laplacian. Unfortunately, the methods introduced in either \cite{CLL}
or \cite{CLLg} relies heavily on the non-degeneracy of $G(\cdot)$,
hence they cannot be applied directly to the
fractional $p$-Laplacian. That is why there have been only few papers
concerning the qualitative properties of the solutions for the fractional
$p$-Laplacian. In this respect, Chen and Li \cite{CL} established some new arguments to prove
the symmetry and monotonicity of positive solutions for the nonlinear
equations with fractional $p$-Laplacian. After Chen and Liu \cite{CL} extended their results to the fractional $p$-Laplacian system \eqref{model1} with $s=t$, $a=b=0$ in $\mathbb{R}^{N}$ or $\mathbb{R}_{+}^{N}$. Very recently, Wu and Niu
\cite{WN} established a narrow region principle to the equation
involving fractional $p$-Laplacian. In the spirit of \cite{WN}, Ma and Zhang \cite{MZ} proved the symmetry of positive solutions for the Choquard
equations involving the fractional $p$-Laplacian.

However, much less effort has been devoted to the Schr\"{o}dinger system \eqref{model1} to our knowledge.
The main purpose of this paper is to extend the direct method of moving planes for general fractional $p$-Laplacian systems by establishing a narrow region principle and a decay at infinity theorem. Then we can apply this method to derive the symmetry, monotonicity and nonexistence of positive solutions to the Schr\"{o}dinger system involving the fractional $p$-Laplacian in various domains.

Now we are in position to state our main results of this
paper as follows.

\begin{theorem}\label{Th1}
Let $u\in C^{1,1}_{loc}(\mathbb{R}^{N})\cap \mathcal{L}_{sp}\cap
C(\mathbb{R}^{N})$ and $v\in C^{1,1}_{loc}(\mathbb{R}^{N})\cap
\mathcal{L}_{tp}\cap C(\mathbb{R}^{N})$ be a positive solution pair
of
\begin{equation}\label{model1-RN}
\left\{\begin{array}{r@{\ \ }c@{\ \ }ll}
\left(-\Delta\right)_{p}^{s}u+au^{p-1}& =&f(u,v), & \mbox{in}\ \ \mathbb{R}^{N}\,, \\[0.05cm]
\left(-\Delta\right)_{p}^{t}v+bv^{p-1}& =&g(u,v),& \mbox{in}\ \ \mathbb{R}^{N}\,,
\end{array}\right.
\end{equation}
where $0<s,\,t<1$, $2<p<+\infty$, $a,\,b>0$ and $f,\, g\in C^{1}\left((0,+\infty)\times(0,+\infty),\mathbb{R}\right)$. Suppose that
\begin{itemize}
\item[(i)] $\frac{\partial f}{\partial v}>0$ and $\frac{\partial g}{\partial u}>0$ for $\forall \, u,\, v>0$;
\item[(ii)] $\frac{\partial f}{\partial u}\leq u^{m-1}v^{n}$ and $\frac{\partial f}{\partial v}\leq u^{m}v^{n-1}$ as $(u,v)\rightarrow(0^{+},0^{+})$;
\item[(iii)] $\frac{\partial g}{\partial u}\leq u^{q-1}v^{r}$ and $\frac{\partial g}{\partial v}\leq u^{q}v^{r-1}$ as $(u,v)\rightarrow(0^{+},0^{+})$;
\item[(iv)] $\frac{\partial f}{\partial u}-a(p-1)u^{p-2}$ is increasing with respect to $u$ as $u\rightarrow 0^{+}$ and $\frac{\partial g}{\partial v}-b(p-1)v^{p-2}$ is increasing with respect to $v$ as $v\rightarrow 0^{+}$;
\item[(v)]  $u(x) \sim \frac{1}{|x|^\gamma}$ and $ v(x) \sim \frac{1}{|x|^\tau}$  as $|x|\rightarrow\infty$,
\end{itemize}
where $m,\,r \geq p-1$, $n,\,q \geq1$ and $\gamma,\,\tau>0$ satisfy
 \begin{equation}\label{parameter1}
 \min\{\gamma (m-1)+\tau n,\,\gamma m+\tau(n-1)\}>\gamma(p-2)+sp
 \end{equation}
and
\begin{equation}\label{parameter2}
  \min\{\tau (r-1)+\gamma q,\,\tau r+\gamma(q-1)\}>\tau(p-2)+tp.
\end{equation}
Then $u$ and $v$ are radially symmetric and monotone decreasing about some point in $\mathbb{R}^n$.
\end{theorem}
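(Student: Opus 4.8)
The plan is to run the direct method of moving planes in the $x_1$ direction (any direction will do by the isotropy of the problem) and show that the moving plane can be pushed all the way to the center from both sides, forcing $u$ and $v$ to be symmetric about a hyperplane through that center; since the direction is arbitrary, radial symmetry and monotone decrease about a common point follow in the standard way. Fix $\lambda\in\mathbb{R}$, set $T_\lambda=\{x\in\mathbb{R}^N\mid x_1=\lambda\}$, $\Sigma_\lambda=\{x\mid x_1<\lambda\}$, let $x^\lambda=(2\lambda-x_1,x_2,\dots,x_N)$ be the reflection, and introduce
\begin{equation*}
u_\lambda(x)=u(x^\lambda),\qquad v_\lambda(x)=v(x^\lambda),\qquad U_\lambda(x)=u_\lambda(x)-u(x),\qquad V_\lambda(x)=v_\lambda(x)-v(x).
\end{equation*}
Subtracting the equations at $x$ and $x^\lambda$ and using the antisymmetry of $(-\Delta)_p^s$ under reflection, one derives, at any point $x\in\Sigma_\lambda$ where $U_\lambda(x)<0$ (and similarly for $V_\lambda$), a differential inequality of the form
\begin{equation*}
(-\Delta)_p^s u_\lambda(x)-(-\Delta)_p^s u(x)+a\bigl(u_\lambda^{p-1}(x)-u^{p-1}(x)\bigr)=f(u_\lambda,v_\lambda)(x)-f(u,v)(x),
\end{equation*}
which after a mean value theorem in the $G$-nonlinearity and in $f$ becomes a linearized system coupling $U_\lambda$ and $V_\lambda$ with coefficients controlled by hypotheses (ii)--(iv). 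Here I would invoke the narrow region principle and the decay at infinity theorem established earlier in the paper: the former handles the situation when $\Sigma_\lambda$ is a thin slab, the latter handles the region near infinity where the coefficients are small because $u,v$ decay.

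The execution has three stages. \emph{Stage one (start of moving planes):} for $\lambda$ sufficiently negative I must show $U_\lambda\geq0$ and $V_\lambda\geq0$ on $\Sigma_\lambda$. Suppose not; then $\inf_{\Sigma_\lambda}U_\lambda<0$ or $\inf_{\Sigma_\lambda}V_\lambda<0$, and since $u,v\to 0$ at infinity the negative infimum is attained (or approached along a sequence that, after the usual translation-compactness trick, yields an interior minimizing point). At such a point the fractional $p$-Laplacian difference has a favorable sign, while hypothesis (v) together with the growth bounds (ii)--(iii) and the parameter restrictions \eqref{parameter1}--\eqref{parameter2} force the right-hand side contribution to be dominated; this is exactly the scenario the decay at infinity theorem is designed for, and it gives a contradiction for $-\lambda$ large. \emph{Stage two (moving the plane):} define $\lambda_0=\sup\{\lambda\leq 0\mid U_\mu\geq0,\ V_\mu\geq0 \text{ on }\Sigma_\mu\ \forall\mu\leq\lambda\}$ and show $\lambda_0=0$. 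If $\lambda_0<0$, by continuity $U_{\lambda_0}\geq0$, $V_{\lambda_0}\geq0$; using the strong maximum principle for the fractional $p$-Laplacian (together with the monotonicity in (iv) and positivity of the cross-derivatives in (i)) one upgrades this to $U_{\lambda_0}>0$, $V_{\lambda_0}>0$ in the interior of $\Sigma_{\lambda_0}$, unless $u\equiv u_{\lambda_0}$. Then one perturbs $\lambda_0$ to $\lambda_0+\varepsilon$: decompose $\Sigma_{\lambda_0+\varepsilon}$ into a compact core (where $U_{\lambda_0},V_{\lambda_0}$ are bounded below by a positive constant, so by continuity $U_{\lambda_0+\varepsilon},V_{\lambda_0+\varepsilon}$ stay nonnegative for small $\varepsilon$), a thin boundary layer near $T_{\lambda_0+\varepsilon}$ handled by the narrow region principle, and a neighborhood of infinity handled again by the decay at infinity theorem. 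This contradicts the definition of $\lambda_0$, so $\lambda_0=0$. \emph{Stage three:} running the same argument from the right ($\lambda\to 0^+$) gives the reverse inequalities, hence $U_0\equiv0$, $V_0\equiv0$, i.e. $u,v$ are symmetric about $T_0$; monotonicity in $x_1$ on $\Sigma_0$ follows from the strict inequality in stage two. Repeating for every direction and using that the center of symmetry must be common to $u$ and $v$ yields the theorem.

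The main obstacle is the degeneracy of $G(w)=|w|^{p-2}w$ at $w=0$ when $p>2$, which is precisely what breaks the standard arguments of \cite{CLL,CLLg}. The mean value expansion $G(a)-G(b)=G'(\xi)(a-b)$ produces a coefficient $(p-1)|\xi|^{p-2}$ that vanishes where $u$ and $u_\lambda$ are close, so the linearized operator loses ellipticity exactly where $U_\lambda$ is small — which is where one needs control. Overcoming this requires the more delicate estimates built into the narrow region principle and decay at infinity theorem of this paper, where the fractional-order nonlocal term is not discarded but used to compensate: at a negative minimum point $x_0$ of $U_\lambda$, the quantity $(-\Delta)_p^s u_\lambda(x_0)-(-\Delta)_p^s u(x_0)$ is bounded below by a positive multiple of an integral that, unlike in the linear case, is genuinely nonlinear in $U_\lambda(x_0)$, and one must track the correct power of $U_\lambda(x_0)$ through the inequality. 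A secondary technical point is the coupling: the inequality for $U_\lambda$ involves $V_\lambda$ through $\partial f/\partial v>0$ (hypothesis (i)) and vice versa, so stages one and two must be carried out for the pair simultaneously, comparing the two negative infima and using \eqref{parameter1}--\eqref{parameter2} to close the loop; the parameter conditions are exactly what guarantee that the system, not just each scalar inequality, is consistent only with $U_\lambda,V_\lambda\geq0$.
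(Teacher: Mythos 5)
Your three-stage outline has a structural error in Stages two and three: you define $\lambda_0 = \sup\{\lambda \leq 0 \mid U_\mu \geq 0,\ V_\mu \geq 0 \text{ on } \Sigma_\mu,\ \forall\, \mu \leq \lambda\}$ and aim to show $\lambda_0 = 0$, then run the argument from the right to conclude $U_0 \equiv V_0 \equiv 0$. That is the scheme for the ball $B_1(0)$, where the domain itself fixes the symmetry center at the origin (this is exactly the proof of Theorem \ref{Th2}). In $\mathbb{R}^N$ there is no a priori center: if $(u,v)$ happens to be symmetric about $T_c$ with $c\neq 0$, then the limiting plane is $T_c$, not $T_0$, so $\lambda_0 = 0$ is simply false, and the "from the right" sweep would produce its own limiting plane $\lambda_0'$ with no reason to equal $0$. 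The paper's proof instead defines $\lambda_0 = \sup\{\lambda \in \mathbb{R} \mid \dots\}$ with no cap, uses the decay hypothesis (v) to show $\lambda_0 < \infty$, and then establishes $U_{\lambda_0} \equiv V_{\lambda_0} \equiv 0$ directly via the dichotomy "either $U_{\lambda_0} \equiv V_{\lambda_0} \equiv 0$ in $\Sigma_{\lambda_0}$ or $U_{\lambda_0},\,V_{\lambda_0} > 0$ in $\Sigma_{\lambda_0}$, and the latter lets the plane move past $\lambda_0$," which is a contradiction. Your branch "unless $u \equiv u_{\lambda_0}$" is exactly where the desired conclusion lives, but you treat it as an exception to be circumvented rather than as the goal, and you never resolve it when $\lambda_0<0$.

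A second, more technical gap sits inside your "compact core + narrow boundary layer + neighborhood of infinity" decomposition in Stage two. The narrow region principle for the system requires $U_\lambda, V_\lambda \geq 0$ on all of $\Sigma_\lambda$ outside the narrow region, while the decay at infinity theorem (Theorem \ref{Decay}) only guarantees that \emph{at least one} of the two negative minimizers $x^0$ (of $U_\lambda$) and $x^1$ (of $V_\lambda$) lies in $B_{R_0}$. A priori $U_\lambda$ could attain its negative minimum in the narrow strip while $V_\lambda$ attains its negative minimum far outside $B_{R_0}$, in which case the hypotheses of neither theorem are satisfied as stated. The paper closes exactly this loophole with a separate estimate comparing $U_\lambda(x^0)$ and $V_\lambda(x^1)$ through the cross-coupling terms and invoking $\tau r + \gamma(q-1) > \tau(p-2) + tp$ to show that $x^1$ cannot escape $B_{R_0}$ either; your plan needs an analogue of that step before the narrow region principle can be applied to the pair.
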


\begin{remark}
Due to the presence of the fractional
$p$-Laplacian and $a,\,b\neq0$, the Kelvin transform is no longer valid, so we need to impose the additional assumptions on
the behavior of $u$ and $v$ at infinity.
\end{remark}

\begin{theorem}\label{Th2}
Let $u\in C^{1,1}_{loc}(B_{1}(0))\cap \mathcal{L}_{sp}\cap
C(B_{1}(0))$ and $v\in C^{1,1}_{loc}(B_{1}(0))\cap
\mathcal{L}_{tp}\cap C(B_{1}(0))$ be a positive solution pair of
\begin{equation}\label{model1-B1}
\left\{\begin{array}{r@{\ \ }c@{\ \ }ll}
\left(-\Delta\right)_{p}^{s}u+au^{p-1}& =&f(u,v), & x\in B_{1}(0)\,, \\[0.05cm]
\left(-\Delta\right)_{p}^{t}v+bv^{p-1}& =&g(u,v),& x\in B_{1}(0)\,, \\[0.05cm]
u = v=0,&& & x\not\in B_{1}(0)\,,
\end{array}\right.
\end{equation}
where $0<s,\,t<1$, $2<p<+\infty$ and $a,\,b>0$. Suppose that $f,\,g\in C^{0,1}\left([0,+\infty)\times[0,+\infty),\mathbb{R}\right)$ satisfy
\begin{equation}\label{f}
  f(u,v_{1})<f(u,v_{2}) \quad \mbox{for}\,\, \forall\, u\geq0,\,0\leq v_{1}<v_{2}
\end{equation}
and
\begin{equation}\label{g}
  g(u_{1},v)<g(u_{2},v) \quad \mbox{for}\,\, \forall\, v\geq0,\,0\leq u_{1}<u_{2},
\end{equation}
respectively. Then $u$ and $v$ are radially symmetric and monotone decreasing about the origin.
\end{theorem}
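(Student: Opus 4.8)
The plan is to prove Theorem~\ref{Th2} by the direct method of moving planes applied simultaneously to the pair $(u,v)$, using the narrow region principle for fractional $p$-Laplacian systems established earlier in the paper. For a direction, say $x_1$, and $\lambda \in (-1,0)$, set $T_\lambda = \{x \mid x_1 = \lambda\}$, $\Sigma_\lambda = \{x \in B_1(0) \mid x_1 < \lambda\}$, and let $x^\lambda = (2\lambda - x_1, x_2, \dots, x_N)$ be the reflection of $x$ across $T_\lambda$. Define $u_\lambda(x) = u(x^\lambda)$, $v_\lambda(x) = v(x^\lambda)$, and the difference functions $U_\lambda = u_\lambda - u$, $V_\lambda = v_\lambda - v$. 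The goal is to show $U_\lambda \geq 0$ and $V_\lambda \geq 0$ in $\Sigma_\lambda$ for all $\lambda \in (-1,0]$; by the analogous statement from the right and rotational invariance this forces radial symmetry and monotonicity about the origin.

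The first step is to derive the system of inequalities satisfied by $U_\lambda$ and $V_\lambda$ in $\Sigma_\lambda$. Exploiting the antisymmetry of $u-u_\lambda$ under reflection and splitting the principal-value integral defining $(-\Delta)_p^s$ into $\Sigma_\lambda$ and its reflection, one obtains a pointwise lower bound of the form
\begin{equation*}
(-\Delta)_p^s u_\lambda(x) - (-\Delta)_p^s u(x) \geq C(x)\, U_\lambda(x)
\end{equation*}
at any point where $U_\lambda$ attains a negative minimum (here $C(x) > 0$ is built from the degenerate weight $|u_\lambda - u|^{p-2}$ and the kernel), and similarly for the $v$-equation; this is precisely the mechanism behind the narrow region principle for $p \neq 2$. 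Using the equations in \eqref{model1-B1} together with $f,g \in C^{0,1}$ and the monotonicity hypotheses \eqref{f}--\eqref{g}, I then want to show that on the negativity set of $(U_\lambda, V_\lambda)$ the right-hand side contributions are controlled: $f(u_\lambda, v_\lambda) - f(u, v)$ decomposes, via the mean value theorem in each variable and \eqref{f}, so that the part depending on $V_\lambda$ has a favorable sign when $V_\lambda < 0$, and the $U_\lambda$-part is absorbed into a Lipschitz term; the lower-order terms $a(u_\lambda^{p-1} - u^{p-1})$ and $b(v_\lambda^{p-1} - v^{p-1})$ are handled because $t \mapsto t^{p-1}$ is monotone. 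Combined, this yields a coupled differential inequality to which the narrow region principle applies.

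The second step is to \emph{start} the moving planes: for $\lambda$ sufficiently close to $-1$, $\Sigma_\lambda$ is a narrow region, so the narrow region principle (with the decay/boundedness provided by $u=v=0$ outside $B_1(0)$ and continuity up to $\partial B_1(0)$) immediately gives $U_\lambda \geq 0$ and $V_\lambda \geq 0$ in $\Sigma_\lambda$. The third step is the \emph{continuation}: let $\lambda_0 = \sup\{\lambda \leq 0 \mid U_\mu, V_\mu \geq 0 \text{ in } \Sigma_\mu \ \forall \mu \leq \lambda\}$ and argue $\lambda_0 = 0$. If $\lambda_0 < 0$, one first upgrades $U_{\lambda_0} \geq 0$ to $U_{\lambda_0} > 0$ in the interior of $\Sigma_{\lambda_0}$ (and likewise $V_{\lambda_0} > 0$) using a strong maximum principle type argument for the fractional $p$-Laplacian — if $U_{\lambda_0}$ vanished at an interior point it would be a global minimum and the nonlocal integral would force $u \equiv u_{\lambda_0}$, contradicting $u=0$ outside the ball while $u>0$ inside — then uses this strict positivity plus continuity in $\lambda$ to push the plane slightly past $\lambda_0$, again invoking the narrow region principle on the thin leftover strip together with the interior positivity on a compact subset, contradicting the maximality of $\lambda_0$. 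Doing the same from the direction $x_1 \to -1^{+}$ gives symmetry in $x_1$ about $0$; rotating, $u$ and $v$ are radial and monotone decreasing in $|x|$.

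The main obstacle I anticipate is precisely the degeneracy of $G(w) = |w|^{p-2}w$ for $p > 2$: the coefficient $C(x)$ in the key inequality degenerates where $u_\lambda - u$ is small, so the classical strong maximum principle and the ``bootstrap'' across $\lambda_0$ are not automatic. I expect the resolution to follow the Chen--Li scheme (as in \cite{CL}, \cite{WN}) — one does not prove a genuine strong maximum principle but instead argues by contradiction directly: assuming $\inf_{\Sigma_{\lambda_0}} U_\lambda < 0$ (or that it fails to be strictly positive) for a sequence $\lambda_k \downarrow \lambda_0$, one locates minimum points, extracts the sign information from the nonlocal operator at those points, and derives a contradiction using hypotheses \eqref{f}--\eqref{g} and the strict monotonicity they encode, together with the fact that $u, v$ are strictly positive inside $B_1(0)$ and vanish outside. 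A secondary technical point is ensuring the narrow region principle is applied with the correct coupling between the two equations — the sign condition \eqref{f} feeds the $v$-difference into the $u$-equation and \eqref{g} feeds the $u$-difference into the $v$-equation — so that the negativity sets of $U_\lambda$ and $V_\lambda$ must be treated jointly rather than one at a time.
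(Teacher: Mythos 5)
Your proposal is correct and follows essentially the same route as the paper: compute the coefficients $C_1,\dots,C_4$ for the coupled anti-symmetric system via the mean value theorem and \eqref{f}--\eqref{g}, invoke the narrow region principle to start the plane near $x_1=-1$, continue to $\lambda_0=0$ using the dichotomy between $U_{\lambda_0}\equiv V_{\lambda_0}\equiv0$ (excluded because $u,v>0$ in $B_1(0)$ while $u=v=0$ outside) and $U_{\lambda_0},V_{\lambda_0}>0$ together with the narrow-strip continuation, and then rotate to conclude radial symmetry. Your cautionary remarks about the degenerate weight $|u_\lambda-u|^{p-2}$ correctly identify the obstacle that the paper's narrow region principle is designed to overcome, so invoking it as a black box (as you do and as the paper does) yields a complete proof.
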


\begin{theorem}\label{Th3}
Let $u\in C^{1,1}_{loc}(\Omega)\cap \mathcal{L}_{sp}\cap C(\Omega)$
and $v\in C^{1,1}_{loc}(\Omega)\cap \mathcal{L}_{tp}\cap C(\Omega)$
be a positive solution pair of
\begin{equation}\label{model1-Pd}
\left\{\begin{array}{r@{\ \ }c@{\ \ }ll}
\left(-\Delta\right)_{p}^{s}u+a(x')u^{p-1}& =&f(u,v), & x\in \Omega\,, \\[0.05cm]
\left(-\Delta\right)_{p}^{t}v+b(x')v^{p-1}& =&g(u,v),& x\in \Omega\,, \\[0.05cm]
u = v=0,&& & x\not\in \Omega\,,
\end{array}\right.
\end{equation}
where $0<s,\,t<1$, $2<p<+\infty$ and $a(x')$, $b(x')$ are bounded
from below in $\Omega$. Meanwhile, $f,\,g\in
C^{0,1}\left([0,+\infty)\times[0,+\infty),\mathbb{R}\right)$ satisfy
\eqref{f} and \eqref{g}. Then $u$ and $v$ are strictly increasing
with respect to the $x_{N}$-axis.
\end{theorem}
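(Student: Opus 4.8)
The plan is to carry out the direct method of moving planes in the $x_N$-direction, relying on the narrow region principle and the strong maximum principle for fractional $p$-Laplacian systems developed in this paper. For $\lambda\in\mathbb{R}$ I would write $T_\lambda=\{x_N=\lambda\}$, $\Sigma_\lambda=\{x_N<\lambda\}$, $x^\lambda=(x',2\lambda-x_N)$, and set
$$U_\lambda(x)=u(x^\lambda)-u(x),\qquad V_\lambda(x)=v(x^\lambda)-v(x).$$
Since $a=a(x')$ and $b=b(x')$ are independent of $x_N$, the reflected pair $(u(\cdot^\lambda),v(\cdot^\lambda))$ satisfies the same system in $\Sigma_\lambda\cap\Omega$; subtracting the equations at $x$ and $x^\lambda$, writing the differences of $f$ and $g$ as bounded-coefficient combinations of $U_\lambda,V_\lambda$ via the Lipschitz bounds and the strict monotonicity \eqref{f}, \eqref{g}, and applying the mean value theorem to $t\mapsto t^{p-1}$, one obtains that $(U_\lambda,V_\lambda)$ satisfies in $\Sigma_\lambda\cap\Omega$ a cooperative linearized system
\begin{align*}
(-\Delta)_p^s u(x^\lambda)-(-\Delta)_p^s u(x) &= c_{11}(x)\,U_\lambda(x)+c_{12}(x)\,V_\lambda(x),\\
(-\Delta)_p^t v(x^\lambda)-(-\Delta)_p^t v(x) &= c_{21}(x)\,U_\lambda(x)+c_{22}(x)\,V_\lambda(x),
\end{align*}
with $c_{12},c_{21}\ge 0$ and $c_{11},c_{22}$ bounded above on $\Sigma_\lambda\cap\Omega$ (here the lower bounds on $a,b$ are used). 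Moreover $U_\lambda,V_\lambda\ge 0$ on $\Sigma_\lambda\setminus\Omega$ for free (there $u=v=0$ while $u,v\ge 0$), and $U_\lambda=V_\lambda=0$ on $T_\lambda$. The goal is $U_\lambda\ge 0$ and $V_\lambda\ge 0$ in $\Sigma_\lambda$ for every $\lambda\in\mathbb{R}$; the feature of the parabolic domain that makes this possible without any decay-at-infinity hypothesis is that $\Sigma_\lambda\cap\Omega=\{|x'|^2<x_N<\lambda\}$ is empty for $\lambda\le 0$ and \emph{bounded} for every finite $\lambda$.

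First I would start the procedure: for $\lambda>0$ small, $\Sigma_\lambda\cap\Omega$ is a thin parabolic cap contained in the slab $\{0<x_N<\lambda\}$, hence a bounded narrow region, so the narrow region principle gives $U_\lambda,V_\lambda\ge 0$ in $\Sigma_\lambda$ for all $\lambda\in(0,\varepsilon_0]$. Then I would set
$$\lambda_0:=\sup\{\lambda>0:\ U_\mu\ge 0\ \text{and}\ V_\mu\ge 0\ \text{in}\ \Sigma_\mu\ \text{for all}\ 0<\mu\le\lambda\},$$
and the aim becomes to show $\lambda_0=+\infty$.

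To move the plane past $\lambda_0$, suppose $\lambda_0<+\infty$. By continuity $U_{\lambda_0},V_{\lambda_0}\ge 0$ in $\Sigma_{\lambda_0}$, and neither vanishes identically: for any $\lambda_0>0$ one can pick $x\in\Sigma_{\lambda_0}\setminus\Omega$ with $x^{\lambda_0}\in\Omega$ (e.g. $|x'|^2=\lambda_0-\delta$, $x_N=\lambda_0-2\delta$ with $\delta$ small), whence $U_{\lambda_0}(x)=u(x^{\lambda_0})>0$, and similarly for $V_{\lambda_0}$. Then the strong maximum principle for the cooperative system gives $U_{\lambda_0},V_{\lambda_0}>0$ throughout $\Omega\cap\Sigma_{\lambda_0}$. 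Fixing a small $\eta>0$, the set $K:=\overline{\Omega}\cap\{x_N\le\lambda_0-\eta\}$ is compact, and $U_{\lambda_0}>0$ at every point of $K$ --- in $K\cap\Omega$ by the strong maximum principle, and on $K\cap\partial\Omega$ because there $U_{\lambda_0}=u(x^{\lambda_0})>0$ since $x^{\lambda_0}\in\Omega$ --- so $\min_K U_{\lambda_0}\ge\delta>0$, and likewise $\min_K V_{\lambda_0}\ge\delta$. By uniform continuity of $u,v$ on bounded sets, for $\lambda=\lambda_0+\varepsilon$ with $\varepsilon$ small one still has $U_\lambda,V_\lambda\ge\delta/2>0$ on $K\subset\Sigma_\lambda$, while $(\Sigma_\lambda\cap\Omega)\setminus K$ lies in the slab $\{\lambda_0-\eta<x_N<\lambda_0+\varepsilon\}$, hence is a bounded narrow region. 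Since $U_\lambda,V_\lambda\ge 0$ on the complement of $(\Sigma_\lambda\cap\Omega)\setminus K$ inside $\Sigma_\lambda$ (namely $\ge\delta/2$ on $K$ and $\ge 0$ on $\Sigma_\lambda\setminus\Omega$), the narrow region principle forces $U_\lambda,V_\lambda\ge 0$ on $(\Sigma_\lambda\cap\Omega)\setminus K$, and therefore on all of $\Sigma_\lambda$. This contradicts the definition of $\lambda_0$, so $\lambda_0=+\infty$.

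Consequently $U_\lambda,V_\lambda\ge 0$ in $\Sigma_\lambda$ for every $\lambda\in\mathbb{R}$, so $u$ and $v$ are non-decreasing in $x_N$ on $\Omega$; to upgrade to strict monotonicity, given $x'$ and $x_N^{1}<x_N^{2}$ with both points in $\Omega$, I would apply the above with $\lambda=(x_N^{1}+x_N^{2})/2$ and invoke the strong maximum principle once more (the alternative $U_\lambda\equiv 0$ being excluded exactly as before) to get $u(x',x_N^{2})-u(x',x_N^{1})=U_\lambda(x',x_N^{1})>0$, and similarly for $v$. The hard part will be the two maximum-principle inputs for the \emph{degenerate} system: the narrow region principle must absorb the coupling terms $c_{12}V_\lambda$, $c_{21}U_\lambda$ and the possibly sign-changing coefficients $c_{11},c_{22}$ using only the thinness of the region, and the strong maximum principle must propagate strict positivity despite $G'(w)=(p-1)|w|^{p-2}\to 0$ as $w\to 0$; but these are precisely the tools developed in this paper, so the remaining work here is the geometric bookkeeping for the parabolic cap $\Sigma_\lambda\cap\Omega$ --- above all, its boundedness for every $\lambda$, which is what lets the argument dispense with a decay-at-infinity input.
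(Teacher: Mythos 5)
Your proposal follows the paper's proof of Theorem~\ref{Th3} essentially verbatim: moving planes in the $x_N$-direction, starting for small $\lambda>0$ via the narrow region principle (Theorem~\ref{NRP}) applied to the bounded parabolic cap $\widehat{\Omega}_\lambda$, defining $\lambda_0$ as the supremum, and ruling out $\lambda_0<+\infty$ by the dichotomy at $\lambda_0$ combined with a second application of the narrow region principle in a thin slab. Your only deviations are cosmetic — you rule out $U_{\lambda_0}\equiv 0$ in advance by picking a point of $\Sigma_{\lambda_0}\setminus\Omega$ whose reflection lies in $\Omega$, whereas the paper lets $U_{\lambda_0}\equiv 0$ be one horn of the dichotomy and kills it afterwards by noting that it would force $u(x',2\lambda_0)=u(x',0)=0$ (the same geometric observation), and you spell out the final upgrade from $U_\lambda\ge 0$ for all $\lambda$ to \emph{strict} monotonicity, which the paper leaves implicit.
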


\begin{theorem}\label{Th4}
Let $u\in C^{1,1}_{loc}(\mathbb{R}_{+}^{N})\cap \mathcal{L}_{sp}\cap
C(\mathbb{R}_{+}^{N})$ and $v\in
C^{1,1}_{loc}(\mathbb{R}_{+}^{N})\cap \mathcal{L}_{tp}\cap
C(\mathbb{R}_{+}^{N})$ be a nonnegative solution pair of
\begin{equation}\label{model1-R+N}
\left\{\begin{array}{r@{\ \ }c@{\ \ }ll}
\left(-\Delta\right)_{p}^{s}u+a(x')u^{p-1}& =&f(u,v), & x\in \mathbb{R}_{+}^{N}\,, \\[0.05cm]
\left(-\Delta\right)_{p}^{t}v+b(x')v^{p-1}& =&g(u,v),& x\in \mathbb{R}_{+}^{N}\,, \\[0.05cm]
u = v=0,&& & x\not\in \mathbb{R}_{+}^{N}\,,
\end{array}\right.
\end{equation}
where $0<s,\,t<1$, $2<p<+\infty$ and $a(x')$, $b(x')$ are bounded from below in $\Omega$. Meanwhile, $f,\,g\in C^{0,1}\left([0,+\infty)\times[0,+\infty),\mathbb{R}\right)$ satisfy
\eqref{f}, \eqref{g} and
\begin{equation}\label{R+-fg}
  f(0,0)=g(0,0)=0.
\end{equation}
Suppose that
\begin{equation}\label{decay-uv}
  \underset{|x| \rightarrow \infty}{\underline{\lim}}u(x)=\underset{|x| \rightarrow \infty}{\underline{\lim}}v(x)=0,
\end{equation}
then $u(x)=v(x)\equiv0$ in $\mathbb{R}^{N}$.
\end{theorem}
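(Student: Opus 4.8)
The plan is to run the direct method of moving planes in the $x_{N}$-direction: the narrow region principle will start the procedure near the hyperplane $\{x_{N}=0\}$, the decay at infinity theorem — whose hypotheses are exactly what \eqref{decay-uv} supplies — will carry the moving plane out to infinity so that $u$ and $v$ become nondecreasing in $x_{N}$, and a final translation/limit argument together with \eqref{R+-fg} will collapse any nontrivial monotone profile. For $\lambda>0$ put $T_{\lambda}=\{x_{N}=\lambda\}$, $\Sigma_{\lambda}=\{0<x_{N}<\lambda\}$, $x^{\lambda}=(x',2\lambda-x_{N})$, and
\[
u_{\lambda}(x)=u(x^{\lambda}),\qquad v_{\lambda}(x)=v(x^{\lambda}),\qquad U_{\lambda}=u_{\lambda}-u,\qquad V_{\lambda}=v_{\lambda}-v.
\]
Subtracting the $u$-equation from the $u_{\lambda}$-equation (and likewise for $v$), and writing $f(u_{\lambda},v_{\lambda})-f(u,v)$ and $g(u_{\lambda},v_{\lambda})-g(u,v)$ as telescoping differences controlled by the Lipschitz bounds and by the monotonicity \eqref{f}--\eqref{g}, one turns the system into a cooperative system of fractional $p$-Laplacian inequalities for $(U_{\lambda},V_{\lambda})$ whose diagonal parts carry $a(x')(u^{p-1}-u_{\lambda}^{p-1})$, $b(x')(v^{p-1}-v_{\lambda}^{p-1})$ and whose off-diagonal coupling is nonnegative precisely by \eqref{f}--\eqref{g}. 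Since $u\equiv v\equiv0$ on $\{x_{N}\le0\}$ whereas $x^{\lambda}\in\mathbb{R}_{+}^{N}$ there, we have $U_{\lambda},V_{\lambda}\ge0$ on $\{x_{N}\le0\}$, $U_{\lambda}=V_{\lambda}=0$ on $T_{\lambda}$, and the antisymmetry $U_{\lambda}(x^{\lambda})=-U_{\lambda}(x)$, $V_{\lambda}(x^{\lambda})=-V_{\lambda}(x)$ — the inputs needed for the maximum principles.

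First I would show that for small $\lambda>0$ one has $U_{\lambda},V_{\lambda}\ge0$ in $\Sigma_{\lambda}$: here $\Sigma_{\lambda}$ is a narrow slab, so the narrow region principle for systems applies, once one checks — this is the role of \eqref{decay-uv} — that a hypothetical negative infimum of $U_{\lambda}$ or $V_{\lambda}$ cannot escape to infinity but must be attained at an interior point, which the decay at infinity theorem forbids. Then I would set $\lambda_{0}=\sup\{\lambda>0:U_{\mu},V_{\mu}\ge0\text{ in }\Sigma_{\mu}\text{ for all }0<\mu\le\lambda\}$ and argue that $\lambda_{0}=\infty$. If $\lambda_{0}<\infty$, then $U_{\lambda_{0}},V_{\lambda_{0}}\ge0$ in $\Sigma_{\lambda_{0}}$ by continuity; either they vanish identically, which with $u\equiv v\equiv0$ on $\{x_{N}\le0\}$ and \eqref{R+-fg} already forces $u\equiv v\equiv0$, or they are strictly positive in the interior, and then for small $\varepsilon>0$ one decomposes $\Sigma_{\lambda_{0}+\varepsilon}$ into a thin layer about $\{x_{N}=0\}\cup T_{\lambda_{0}+\varepsilon}$ (handled by the narrow region principle), a far region $\{|x'|\ge R\}$ (handled by the decay at infinity theorem via \eqref{decay-uv}), and a bounded middle piece compactly contained in $\Sigma_{\lambda_{0}}$ on which $U_{\lambda_{0}},V_{\lambda_{0}}$ stay bounded below by a positive constant, so that $U_{\lambda_{0}+\varepsilon},V_{\lambda_{0}+\varepsilon}\ge0$ there by continuity — contradicting the maximality of $\lambda_{0}$. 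Hence $\lambda_{0}=\infty$, i.e.\ $u$ and $v$ are nondecreasing in $x_{N}$ on $\{x_{N}\ge0\}$.

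To finish, I would consider the translates $u^{k}(x)=u(x',x_{N}+k)$ and $v^{k}(x)=v(x',x_{N}+k)$, which increase in $k$ by the previous step; using the equations together with interior regularity for the fractional $p$-Laplacian they converge locally uniformly to limits $\bar u(x')$, $\bar v(x')$ independent of $x_{N}$, solving the reduced system obtained by integrating the kernels over the $x_{N}$-variable. Carrying \eqref{decay-uv} over to $(\bar u,\bar v)$ and using \eqref{R+-fg}, a further application of the decay at infinity theorem gives $\bar u\equiv\bar v\equiv0$; since $0\le u\le\bar u$ and $0\le v\le\bar v$ by monotonicity, this yields $u\equiv v\equiv0$ in $\mathbb{R}^{N}$.

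The hard part will be the non-compactness of the slabs $\Sigma_{\lambda}$: because \eqref{decay-uv} is only a lower-limit condition and not a genuine decay, the negative minima of $U_{\lambda}$ and $V_{\lambda}$ need not be attained, so the decay at infinity theorem has to be invoked precisely to prevent those minima from running off to infinity — both at the start and while advancing the planes. The most delicate instance of this is the final step, where one must verify that the limiting profile $(\bar u,\bar v)$ still carries enough decay at infinity for its triviality to follow.
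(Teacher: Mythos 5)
Your proposal has a structural gap that concerns the choice of tool, and it omits a preliminary step the paper needs because $(u,v)$ is only assumed \emph{nonnegative}, not positive.

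\textbf{Wrong invocation of ``decay at infinity.''} You repeatedly claim that the decay-at-infinity theorem can be fed by \eqref{decay-uv}, but its hypotheses are not what \eqref{decay-uv} supplies. Theorem~\ref{Decay} in the paper requires the power-law asymptotics $u\sim|x|^{-\gamma}$, $v\sim|x|^{-\tau}$ (assumption (v) of Theorem~\ref{Th1}) and the rate conditions \eqref{decay-C23} on $C_{2},C_{3}$; none of these are available under the hypotheses of Theorem~\ref{Th4}, which only asserts $\underline{\lim}_{|x|\to\infty}u=\underline{\lim}_{|x|\to\infty}v=0$. The correct tool on the unbounded slabs $\Sigma'_{\lambda}$ is the narrow region principle for \emph{unbounded} narrow regions (the addendum at the end of Theorem~\ref{NRP}), which only needs $\underline{\lim}_{|x|\to\infty}U_{\lambda},V_{\lambda}\geq0$; that is exactly what \eqref{decay-uv} together with $u,v>0$ gives. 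Replacing the narrow region principle by the decay theorem at both the starting step and the advancing step is where your argument, as written, cannot be carried out.

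\textbf{Missing dichotomy.} Because $(u,v)$ is nonnegative rather than strictly positive, one must first prove that either $u,v>0$ in $\mathbb{R}^{N}_{+}$ or $u\equiv v\equiv 0$. The paper does this at the outset: if $u(x_{0})=0$ at some interior $x_{0}$ but $u\not\equiv 0$, then $(-\Delta)^{s}_{p}u(x_{0})<0$ directly from the kernel, while the equation together with $f(0,0)=0$ and the monotonicity \eqref{f} forces $(-\Delta)^{s}_{p}u(x_{0})=f(0,v(x_{0}))\geq0$, a contradiction; then $u\equiv0$ propagates to $v\equiv0$ via \eqref{R+-fg}. Your proposal uses $u,v>0$ implicitly (e.g.\ ``strictly positive in the interior,'' antisymmetric sign conditions) but never establishes it.

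\textbf{Overcomplicated and underjustified finish.} After $\lambda_{0}=+\infty$ the paper simply notes that monotone increase in $x_{N}$ of a positive solution is incompatible with $\underline{\lim}_{|x|\to\infty}u=\underline{\lim}_{|x|\to\infty}v=0$ and stops. Your translation/limit argument is both unnecessary and problematic: you cannot ``integrate the kernels over $x_{N}$'' to obtain a reduced lower-dimensional system, since $(-\Delta)^{s}_{p}$ acts on differences $u(x)-u(y)$ nonlinearly, and you give no uniform bounds that would let the translates $u^{k},v^{k}$ converge or let the limiting equations make sense. That machinery is characteristic of the sliding method for local equations, and it does not transfer here without substantial extra work. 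The short route the paper takes is the intended one.
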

The remainder of this paper is organized as follows. In section
\ref{section2}\,, we establish the the corresponding
narrow region principle and decay at infinity theorem. Section \ref{section3} contains the proof of
Theorem \ref{Th1} and \ref{Th2}\,. Moreover, Theorem \ref{Th3} and \ref{Th4} are proved in the last section.

\section{Narrow Region Principle and Decay at Infinity }\label{section2}

In this section, we construct the
narrow region principle and the decay at infinity theorem for anti-symmetric functions, which play essential roles in carrying on the
 direct method of moving planes for the fractional $p$-Laplacian systems.

Before establishing two maximum principles, we first introduce the following notations to facilitate our description.
Taking the whole space $\mathbb{R}^{N}$ as an example. Let
$$T_{\lambda} :=\{x \in \mathbb{R}^{N}\mid x_1=\lambda, \mbox{ for some } \lambda\in \mathbb{R}\}$$
be the moving planes,
$$\Sigma_{\lambda} :=\{x \in \mathbb{R}^{N}\mid x_1<\lambda\}$$
be the region to the left of $T_{\lambda}$ and
$$ x^{\lambda} :=(2\lambda-x_1, x_2, ..., x_N)$$
be the reflection of $x$ with respect to $T_{\lambda}$.
Let $\left(u,v\right)$ be a solution pair of Schr\"{o}dinger system \eqref{model1-RN},
we denote the reflected functions by $u_{\lambda}(x):=u(x^{\lambda})$ and $v_{\lambda}(x):=v(x^{\lambda})$.
Moreover,
\begin{equation*}
\left\{\begin{array}{r@{\ \ }c@{\ \ }ll}
U_{\lambda} (x)& :=&u(x^{\lambda}) - u(x), \\[0.05cm]
V_{\lambda} (x)& :=&v(x^{\lambda}) - v(x),
\end{array}\right.
\end{equation*}
represent the comparison between the values of $u(x)$, $u(x^{\lambda})$ and $v(x)$, $v(x^{\lambda})$, respectively.
Evidently, $U_{\lambda}$ and $V_{\lambda}$ are anti-symmetric
functions, i.e., $U_{\lambda}(x^{\lambda})=-U_{\lambda}(x)$ and
$V_{\lambda}(x^{\lambda})=-V_{\lambda}(x)$. From now on,
$C$ denotes a constant whose value may be different from line to
line, and only the relevant dependence is specified in what follows.

Now we start by establishing the following narrow region principle, which generalizes Theorem 1.1 in \cite{WN} to the
 fractional $p$-Laplacian systems.

\begin{theorem}\label{NRP}(\textbf{Narrow region principle})
Let $\Omega$ be a bounded narrow region in $\Sigma_{\lambda}$, such
that it is contained in $\left\{x\mid
\lambda-\delta<x_{1}<\lambda\right\}$ with a small $\delta$. Assume
that $u\in \mathcal{L}_{sp}\cap C^{1,1}_{loc}(\Sigma_{\lambda})$ and
$v\in \mathcal{L}_{tp}\cap C^{1,1}_{loc}(\Sigma_{\lambda})$,
$U_{\lambda}$, $V_{\lambda}$ are lower semi-continuous on
$\overline{\Omega}$ and satisfy
\begin{equation}\label{NRP-model}
\left\{\begin{array}{r@{\ \ }c@{\ \ }ll}
&&\left(-\Delta\right)_{p}^{s}u_{\lambda}(x)-\left(-\Delta\right)_{p}^{s}u(x)+C_{1}(x)U_{\lambda}(x)+C_{2}(x)V_{\lambda}(x)\geq0, & \ \ x\in\Omega\,, \\[0.05cm]
&&\left(-\Delta\right)_{p}^{t}v_{\lambda}(x)-\left(-\Delta\right)_{p}^{t}v(x)+C_{3}(x)U_{\lambda}(x)+C_{4}(x)V_{\lambda}(x)\geq0, & \ \ x\in\Omega\,, \\[0.05cm]
&&U_{\lambda}(x) \geq 0,\,  V_{\lambda}(x)\geq0, & \ \ x\in\Sigma_{\lambda}\backslash\Omega\,, \\[0.05cm]
&&U_{\lambda}(x^{\lambda})=-U_{\lambda}(x),\,  V_{\lambda}(x^{\lambda})=-V_{\lambda}(x), & \ \ x\in\Sigma_{\lambda}\,,
\end{array}\right.
\end{equation}
where $C_{1}(x)$, $C_{2}(x)$, $C_{3}(x)$ and $C_{4}(x)$ have lower bounds as $C_{1},\,C_{2},\,C_{3},\,C_{4}\in\mathbb{R}$, respectively,
and $C_{2}(x),\,C_{3}(x)<0$ in $\Omega$. If there exist
$y^{0},\,y^{1}\in\Sigma_{\lambda}$ such that $U_{\lambda}(y^{0})>0$ and $V_{\lambda}(y^{1})>0$, then
\begin{equation}\label{NRP-r1}
  U_{\lambda}(x),\,V_{\lambda}(x)\geq0,\quad x\in\Omega
\end{equation}
for sufficiently small $\delta$.
Moreover, if $U_{\lambda}(x)=0$ or $V_{\lambda}(x)=0$ at some point in $\Omega$, then
\begin{equation}\label{NRP-r2}
  U_{\lambda}(x)=V_{\lambda}(x)\equiv0 \quad \mbox{in} \
  \mathbb{R}^{N}.
\end{equation}
The above conclusions are valid for an unbounded narrow region $\Omega$ if we further suppose that
\begin{equation*}
\underset{|x| \rightarrow \infty}{\underline{\lim}} U_{\lambda}(x),\,V_{\lambda}(x)\geq0.
\end{equation*}
\end{theorem}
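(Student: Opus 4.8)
The plan is to argue by contradiction, following the scheme of the direct method of moving planes; the new ingredient, forced by the failure of the non-degeneracy condition \eqref{G'} for the fractional $p$-Laplacian with $p>2$, is to replace the usual linearisation of $G(w):=|w|^{p-2}w$ by an elementary convexity inequality. First I would assume that \eqref{NRP-r1} fails. In the bounded case $\overline{\Omega}$ is compact, so by the lower semi-continuity of $U_{\lambda}$ and $V_{\lambda}$ their infima over $\overline{\Omega}$ are attained; since both functions are nonnegative on $\Sigma_{\lambda}\setminus\Omega$ and vanish on $T_{\lambda}$ by anti-symmetry, a negative infimum can only be attained at an interior point. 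Because the two lines of \eqref{NRP-model} are interchanged by swapping $u\leftrightarrow v$, $s\leftrightarrow t$ and the sign pairs $(C_{1},C_{2})\leftrightarrow(C_{4},C_{3})$, I may assume there is $x^{0}\in\Omega$ with
\[
U_{\lambda}(x^{0})=\min\Bigl\{\inf_{\Omega}U_{\lambda},\ \inf_{\Omega}V_{\lambda}\Bigr\}=:-\varepsilon_{0}<0 ,
\]
so that $U_{\lambda}(y)\ge-\varepsilon_{0}$ for every $y\in\Sigma_{\lambda}$ and $V_{\lambda}(x^{0})\ge-\varepsilon_{0}$.

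The heart of the proof is a pointwise estimate for $(-\Delta)_{p}^{s}u_{\lambda}(x^{0})-(-\Delta)_{p}^{s}u(x^{0})$. Starting from \eqref{Frac-p-Laplacian-s}, I would split $\mathbb{R}^{N}$ along $T_{\lambda}$, reflect the integral over $(\Sigma_{\lambda})^{c}$ through $T_{\lambda}$ and use $u_{\lambda}(y^{\lambda})=u(y)$, $u(y^{\lambda})=u_{\lambda}(y)$, so that the difference is rewritten as a sum of two integrals over $\Sigma_{\lambda}$ against the kernels $|x^{0}-y|^{-(N+sp)}$ and $|x^{0}-y^{\lambda}|^{-(N+sp)}$, noting $|x^{0}-y^{\lambda}|\ge|x^{0}-y|$ for $x^{0},y\in\Sigma_{\lambda}$. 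Minimality of $x^{0}$ gives $u_{\lambda}(x^{0})-u_{\lambda}(y)\le u(x^{0})-u(y)$ on $\Sigma_{\lambda}$, so monotonicity of $G$ makes the first integrand nonpositive; the quantitative gain comes from the inequality
\[
G(\eta+\varepsilon_{0})-G(\eta)\ \ge\ c_{p}\,\varepsilon_{0}^{\,p-1}\qquad\text{for all }\eta\in\mathbb{R},
\]
valid since $G$ is convex on $[0,\infty)$, concave on $(-\infty,0]$ and vanishes at $0$, together with the fact that, by scaling, the kernel $|x^{0}-y^{\lambda}|^{-(N+sp)}$ integrates over $\Sigma_{\lambda}$ to a quantity bounded below by $c\,\delta^{-sp}$ because $x^{0}$ lies within distance $\delta$ of $T_{\lambda}$. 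Carrying out the bookkeeping as in \cite{WN} should yield
\[
(-\Delta)_{p}^{s}u_{\lambda}(x^{0})-(-\Delta)_{p}^{s}u(x^{0})\ \le\ \frac{C}{\delta^{sp}}\,G\bigl(U_{\lambda}(x^{0})\bigr)\ =\ -\frac{C}{\delta^{sp}}\,\varepsilon_{0}^{\,p-1},
\]
with $C>0$ depending only on $N,s,p$ and an upper bound for $u$ (and, symmetrically, for the second operator and $V_{\lambda}$). I expect this step to be the main obstacle: the degeneracy $G'(0)=0$ invalidates the arguments of \cite{CLL,CLLg}, and recovering the correct power of $\delta$ while only paying the price $\varepsilon_{0}^{p-1}$ instead of $\varepsilon_{0}$ requires this convexity estimate and a careful handling of the principal value near $x^{0}$.

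Inserting the estimate into the first line of \eqref{NRP-model} at $x=x^{0}$ closes the argument. Since $C_{1}$ is bounded from below, $C_{2}$ is bounded with $C_{2}<0$, and $V_{\lambda}(x^{0})\ge U_{\lambda}(x^{0})=-\varepsilon_{0}$, the cross terms obey $C_{1}(x^{0})U_{\lambda}(x^{0})+C_{2}(x^{0})V_{\lambda}(x^{0})\le C'\varepsilon_{0}$ for a constant $C'$ depending only on the bounds of the $C_{i}$, so
\[
0\ \le\ -\frac{C}{\delta^{sp}}\,\varepsilon_{0}^{\,p-1}+C'\varepsilon_{0},\qquad\text{hence}\qquad \varepsilon_{0}^{\,p-2}\ \le\ \frac{C'}{C}\,\delta^{sp}.
\]
The hypotheses $U_{\lambda}(y^{0})>0$, $V_{\lambda}(y^{1})>0$ exclude the trivial anti-symmetric solution, and for $\delta$ smaller than a threshold depending only on $C,C'$ the last display is incompatible with the existence of a negative minimum; this gives \eqref{NRP-r1}. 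The difficulty special to the system — that $C_{2}(x)V_{\lambda}$ and $C_{3}(x)U_{\lambda}$ have no definite sign — is exactly what the hypotheses $C_{2},C_{3}<0$, applied at the point realising the \emph{joint} minimum of $U_{\lambda}$ and $V_{\lambda}$, are designed to handle.

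Finally, for the rigidity statement, if $U_{\lambda}(x)=0$ at some $x\in\Omega$, then by \eqref{NRP-r1} $x$ is a minimum of $U_{\lambda}$, so repeating the pointwise computation with $\varepsilon_{0}=0$ gives $(-\Delta)_{p}^{s}u_{\lambda}(x)-(-\Delta)_{p}^{s}u(x)\le0$, while the first line of \eqref{NRP-model} forces $(-\Delta)_{p}^{s}u_{\lambda}(x)-(-\Delta)_{p}^{s}u(x)\ge-C_{2}(x)V_{\lambda}(x)\ge0$ since $C_{2}<0$ and $V_{\lambda}\ge0$. Equality then forces, by injectivity of $G$, $u_{\lambda}(y)=u(y)$ for almost every $y$, hence $U_{\lambda}\equiv0$ on $\Sigma_{\lambda}$ and therefore on $\mathbb{R}^{N}$ by anti-symmetry, and it also forces $C_{2}(x)V_{\lambda}(x)=0$, so $V_{\lambda}(x)=0$; running the same argument with the second equation gives $V_{\lambda}\equiv0$, which is \eqref{NRP-r2}. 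For an unbounded $\Omega$, the extra assumption $\liminf_{|x|\to\infty}U_{\lambda},V_{\lambda}\ge0$ prevents the negative infimum from escaping to infinity, so it is again attained at a finite interior point and all of the above applies unchanged.
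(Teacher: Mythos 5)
Your proposal identifies the right difficulty (degeneracy of $G'$ at $0$ for $p>2$) and reaches a correct auxiliary estimate, but the final contradiction step does not close, and this is a genuine gap.

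Concretely, your convexity inequality $G(\eta+\varepsilon_{0})-G(\eta)\ge c_{p}\varepsilon_{0}^{p-1}$ is true (the minimum over $\eta$ is at $\eta=-\varepsilon_{0}/2$ and gives $c_{p}=2^{2-p}$), and applying it to the terms $G(u_{\lambda}(x^{0})-u_{\lambda}(y))-G(u(x^{0})-u_{\lambda}(y))$ and $G(u_{\lambda}(x^{0})-u(y))-G(u(x^{0})-u(y))$ (whose argument gap is exactly $U_{\lambda}(x^{0})=-\varepsilon_{0}$), integrated against $|x^{0}-y^{\lambda}|^{-(N+sp)}$, does give $(-\Delta)_{p}^{s}u_{\lambda}(x^{0})-(-\Delta)_{p}^{s}u(x^{0})\le -C\varepsilon_{0}^{p-1}\delta_{x^{0}}^{-sp}$. (Note your stated splitting isolates instead $G(u_{\lambda}(x^{0})-u(y))-G(u(x^{0})-u_{\lambda}(y))$, whose argument gap is $U_{\lambda}(x^{0})+U_{\lambda}(y)$, which has no sign; the grouping must be chosen as above.) But then the first line of \eqref{NRP-model} gives
\[
0\le -\frac{C}{\delta^{sp}}\varepsilon_{0}^{p-1}+C'\varepsilon_{0}\quad\Longrightarrow\quad \varepsilon_{0}^{p-2}\le\frac{C'}{C}\,\delta^{sp},
\]
and this is \emph{not} a contradiction for $p>2$: it is perfectly consistent with any sufficiently small $\varepsilon_{0}>0$. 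Your argument would close only for $p=2$, where $\varepsilon_{0}^{p-2}=1$; for $p>2$ the degeneracy you correctly flagged resurfaces at exactly this point. A telltale sign is that your proof never really uses the hypotheses $U_{\lambda}(y^{0})>0$, $V_{\lambda}(y^{1})>0$, whereas the paper's Remark explicitly highlights them as essential and not present in the $p=2$ Schr\"odinger-system version.

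The paper's argument is of a different nature. It groups the difference as $I_{1}+I_{2}$, where $I_{1}$ (the one you quantify) is only used to be $\le 0$, and the quantitative gain comes from $I_{2}=\delta_{x^{0}}F(x^{0})$ obtained from the mean value theorem applied to the kernel difference. The paper then argues by contradiction that $F(x^{0})$ stays bounded away from zero as $\delta_{x^{0}}\to 0$ — if not, $U_{\lambda}$ would be identically zero on $\Sigma_{\lambda}$, contradicting $U_{\lambda}(y^{0})>0$ — giving $(-\Delta)_{p}^{s}u_{\lambda}(x^{0})-(-\Delta)_{p}^{s}u(x^{0})\le -C\delta_{x^{0}}$ with a constant $C$ independent of $\varepsilon_{0}$. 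Combined with the Taylor bound $U_{\lambda}(x^{0})=o(1)\delta_{x^{0}}$ from $\nabla U_{\lambda}(x^{0})=0$, the lower-order terms are $o(\delta_{x^{0}})$ and the inequality becomes $0\le\delta_{x^{0}}(-C+o(1))<0$, a genuine contradiction. This qualitative route (bounded-away-from-zero plus Taylor) is what replaces your quantitative convexity estimate, and it requires the extra positivity hypothesis precisely because of the degeneracy of $G'$ that defeats your last step. The paper also handles the system by first deducing $V_{\lambda}(x^{0})<0$ from the $U$-equation, then working at a separate minimizer $x^{1}$ of $V_{\lambda}$, rather than at your single joint minimizer; your joint-minimizer choice is a legitimate simplification of that bookkeeping, but it does not repair the main gap.
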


\begin{remark}
Compared with the narrow region principle for the Schr\"{o}dinger system
with fractional Laplace equations in \cite{QY}, here we need to impose
the extra assumption that there exist
$y^{0},\,y^{1}\in\Sigma_{\lambda}$ such that $U_{\lambda}(y^{0})>0$ and $V_{\lambda}(y^{1})>0$, respectively. As a matter of fact, this condition is automatically satisfied for \eqref{model1-RN}, \eqref{model1-B1}, \eqref{model1-Pd} and \eqref{model1-R+N}.
\end{remark}

\begin{proof}[Proof of Theorem \ref{NRP}] The proof goes by contradiction. Without loss of generality, we assume that there exists $x^{0}\in \Omega$ such that
$$U_{\lambda}(x^{0})=\min_{\Omega} U_{\lambda}<0.$$
Otherwise, the same arguments as follows can also yield a contradiction for the case that there exists $x^{1}\in \Omega$ such that
$V_{\lambda}(x^{1})=\displaystyle\min_{\Omega} V_{\lambda}<0$.

By a direct calculation, we obtain
\begin{eqnarray}\label{n-1}
% \nonumber to remove numbering (before each equation)
  &&(-\Delta)_{p}^{s}u_{\lambda}(x^{0})-(-\Delta)_{p}^{s}u(x^{0})\nonumber\\
  &=& C_{N,sp}\,PV\int_{\mathbb{R}^{N}}\frac{G(u_{\lambda}(x^{0})-u_{\lambda}(y))-G(u(x^{0})-u(y))}{|x^{0}-y|^{N+sp}}dy \nonumber\\
   &=& C_{N,sp}\,PV\int_{\Sigma_{\lambda}}\frac{G(u_{\lambda}(x^{0})-u_{\lambda}(y))}{|x^{0}-y|^{N+sp}}
   +\frac{G(u_{\lambda}(x^{0})-u(y))}{|x^{0}-y^{\lambda}|^{N+sp}}dy \nonumber \\
   && -C_{N,sp}\,PV\int_{\Sigma_{\lambda}}\frac{G(u(x^{0})-u(y))}{|x^{0}-y|^{N+sp}}
   +\frac{G(u(x^{0})-u_{\lambda}(y))}{|x^{0}-y^{\lambda}|^{N+sp}}dy \nonumber  \\
   &=&  C_{N,sp}\int_{\Sigma_{\lambda}}\frac{\left[G(u_{\lambda}(x^{0})-u_{\lambda}(y))-G(u(x^{0})-u_{\lambda}(y))\right]
   +\left[G(u_{\lambda}(x^{0})-u(y))-G(u(x^{0})-u(y))\right]}{|x^{0}-y^{\lambda}|^{N+sp}}dy \nonumber\\
   &&+C_{N,sp}\,PV\int_{\Sigma_{\lambda}}\left[\frac{1}{|x^{0}-y|^{N+sp}}-\frac{1}{|x^{0}-y^{\lambda}|^{N+sp}}\right]
   \left[G(u_{\lambda}(x^{0})-u_{\lambda}(y))-G(u(x^{0})-u(y))\right]dy \nonumber\\
   &:=& C_{N,sp}\left(I_{1}+I_{2}\right).
\end{eqnarray}

We start by estimating $I_{1}$. It follows from mean value theorem and the monotonicity of $G$ that
\begin{equation}\label{n-2}
  I_{1}=U_{\lambda}(x^{0})\int_{\Sigma_{\lambda}}\frac{G'\left(\zeta(y)\right)+G'\left(\eta(y)\right)}{|x^{0}-y^{\lambda}|^{N+sp}}dy\leq0,
\end{equation}
where $\zeta(y)\in \left(u_{\lambda}(x^{0})-u_{\lambda}(y),\,u(x^{0})-u_{\lambda}(y)\right)$ and $\eta(y)\in \left(u_{\lambda}(x^{0})-u(y),\,u(x^{0})-u(y)\right)$.

Now we turn our attention to $I_{2}$. Let $\delta_{x^{0}}:=\operatorname{dist}\left\{x^{0},\,T_{\lambda}\right\}$, it is not difficult to verify that $\delta_{x^{0}}=\lambda-x_{1}^{0}$. Then applying mean value theorem again, we compute
\begin{equation}\label{n-3}
  \frac{1}{|x^{0}-y|^{n+sp}}-\frac{1}{|x^{0}-y^{\lambda}|^{N+sp}}=
  \frac{2\left(N+sp\right)\left(\lambda-y_{1}\right)}{\left|x^{0}-\varsigma\right|^{N+sp+2}}\delta_{x^{0}},
\end{equation}
where $\varsigma$ is a point on the line segment between $y$ and $y^{\lambda}$. Thus,
\begin{eqnarray}\label{n-4}
% \nonumber to remove numbering (before each equation)
  I_{2}
   &=& \delta_{x^{0}}\int_{\Sigma_{\lambda}}\frac{2\left(N+sp\right)\left(\lambda-y_{1}\right)}{\left|x^{0}-\varsigma\right|^{N+sp+2}}
   \left[G(u_{\lambda}(x^{0})-u_{\lambda}(y))-G(u(x^{0})-u(y))\right]dy \nonumber\\
   &:=&  \delta_{x^{0}}F(x^{0}).
\end{eqnarray}

Before estimating further, we claim that there exists a positive constant $c_{1}$ such that
\begin{equation}\label{n-5}
  F(x^{0})\leq-\frac{c_{1}}{2}
\end{equation}
for sufficiently small $\delta_{x^{0}}$. In doing so, we first show that
\begin{equation}\label{n-6}
  F(x^{0})<0.
\end{equation}
Applying the monotonicity of $G$, we derive
\begin{equation*}
  G(u_{\lambda}(x^{0})-u_{\lambda}(y))-G(u(x^{0})-u(y))\leq0,
\end{equation*}
which is not identically zero in $\Sigma_{\lambda}$. Hence, we conclude \eqref{n-6} by virtue of the continuity of $u$ and
\begin{equation*}
  \frac{2\left(N+sp\right)\left(\lambda-y_{1}\right)}{\left|x^{0}-\varsigma\right|^{N+sp+2}}
  =\frac{1}{\delta_{x^{0}}}\left[\frac{1}{|x^{0}-y|^{N+sp}}-\frac{1}{|x^{0}-y^{\lambda}|^{N+sp}}\right]>0.
\end{equation*}
Next, we continue to prove \eqref{n-5}. If not, then
\begin{equation*}
  F(x^{0})\rightarrow0 \ \ \mbox{as}\ \ \delta_{x^{0}}\rightarrow 0.
\end{equation*}
It is revealed that if $\delta_{x^{0}}\rightarrow 0$, then
\begin{equation*}
  G(u_{\lambda}(x^{0})-u_{\lambda}(y))-G(u(x^{0})-u(y))\rightarrow 0 \ \ \mbox{for} \ \ \forall\, y\in\Sigma_{\lambda}.
\end{equation*}
Utilizing the monotonicity of $G$ and the continuity of $u$ again, we obtain
\begin{equation*}
  U_{\lambda}(x^{0})-U_{\lambda}(y)\rightarrow 0 \ \ \mbox{for} \ \ \forall\, y\in\Sigma_{\lambda}.
\end{equation*}
Note that $U_{\lambda}(x^{0})\rightarrow 0$ as $\delta_{x^{0}}\rightarrow 0$, then we derive
$$U_{\lambda}(y)\equiv0 \ \ \mbox{for} \ \ \forall\, y\in\Sigma_{\lambda},$$
which contradicts with the condition that there exists $y^{0}\in\Sigma_{\lambda}$ such that $U_{\lambda}(y^{0})>0$. Thus, we can deduce there exists a positive constant $c_{2}$ such that
\begin{equation*}
  F(x^{0})\rightarrow-c_{2} \ \ \mbox{as}\ \ \delta_{x^{0}}\rightarrow 0.
\end{equation*}
Hence, we conclude the assertion \eqref{n-5} from the continuity of $F(x^{0})$ with respect to $x^{0}$,.

Inserting \eqref{n-5} into \eqref{n-4}, we obtain
\begin{equation}\label{n-7}
  I_{2}\leq-\frac{c_{2}}{2}\delta_{x^{0}}.
\end{equation}
Then a combination of \eqref{n-1}, \eqref{n-2} and \eqref{n-7} yields that
\begin{equation}\label{n-8}
  (-\Delta)_{p}^{s}u_{\lambda}(x^{0})-(-\Delta)_{p}^{s}u(x^{0})\leq -C\delta_{x^{0}}.
\end{equation}
Thus, applying the first inequality in \eqref{NRP-model} and $C_{1}(x)\geq C_{1}$, we derive
\begin{eqnarray}\label{n-9}
% \nonumber to remove numbering (before each equation)
  -C_{2}(x^{0})V_{\lambda}(x^{0}) &\leq& -C\delta_{x^{0}}+ C_{1}(x^{0})U_{\lambda}(x^{0})\nonumber\\
   &\leq&  -C\delta_{x^{0}}+ C_{1}U_{\lambda}(x^{0}).
\end{eqnarray}
Note that since
\begin{equation*}
  \nabla U_{\lambda}(x^{0})=0,
\end{equation*}
 we get
\begin{equation*}
  0=U_{\lambda}(x^{2})=U_{\lambda}(x^{0})+\nabla U_{\lambda}(x^{0})\left(x^{2}-x^{0}\right)+o\left(|x^{2}-x^{0}|\right),
\end{equation*}
by Taylor expansion, where
$x^{2}=\left(\lambda,x_{2}^{0},...,x_{N}^{0}\right)\in T_{\lambda}$.
Hence, it means that
\begin{equation}\label{n-10}
  U_{\lambda}(x^{0})=o(1)\delta_{x^{0}}
\end{equation}
for sufficiently small $\delta_{x^{0}}$. Substituting \eqref{n-10} into \eqref{n-9}, we have
\begin{equation*}
  -C_{2}(x^{0})V_{\lambda}(x^{0})\leq \delta_{x^{0}}\left(-C+C_{1}o(1)\right)<0,
\end{equation*}
for small enough $\delta_{x^{0}}$. Then it follows from $C_{2}(x)<0$
that $V_{\lambda}(x^{0})<0$. Hence, the lower semi-continuity of
$V_{\lambda}$ on $\overline{\Omega}$ implies there exists
$x^{1}\in\Omega$ such that
\begin{equation*}
   V_{\lambda}(x^{1})=\min_{\Omega} V_{\lambda}<0.
\end{equation*}
In analogy with \eqref{n-8} and \eqref{n-10}, we can deduce
\begin{equation}\label{n-11}
  (-\Delta)_{p}^{t}v_{\lambda}(x^{1})-(-\Delta)_{p}^{t}v(x^{1})\leq -C\delta_{x^{1}}
\end{equation}
and
\begin{equation}\label{n-12}
  V_{\lambda}(x^{1})=o(1)\delta_{x^{1}}
\end{equation}
for sufficiently small $\delta_{x^{1}}$, respectively, where $\delta_{x^{1}}:=\operatorname{dist}\left\{x^{1},\,T_{\lambda}\right\}=\lambda-x_{1}^{1}$.

In terms of the assumptions imposed on $C_{3}(x)$ and $C_{4}(x)$ in Theorem \ref{NRP}\,, and combining the second inequality in \eqref{NRP-model}, \eqref{n-11}, \eqref{n-10} with \eqref{n-12}, we can conclude that
\begin{eqnarray*}
% \nonumber to remove numbering (before each equation)
   0&\leq& \left(-\Delta\right)_{p}^{t}v_{\lambda}(x^{1})-\left(-\Delta\right)_{p}^{t}v(x^{1})+C_{3}(x^{1})U_{\lambda}(x^{1})
   +C_{4}(x^{1})V_{\lambda}(x^{1})\nonumber \\
   &\leq& -C\delta_{x^{1}}+C_{3}U_{\lambda}(x^{0})
   +C_{4}V_{\lambda}(x^{1})\nonumber\\
   &=& -C\delta_{x^{1}}+C_{3}\,o(1)\delta_{x^{0}}
   +C_{4}\,o(1)\delta_{x^{1}}<0
\end{eqnarray*}
for sufficiently small $\delta$, which deduces a contradiction. Thus, \eqref{NRP-r1} is proved.

Subsequently, in order to prove \eqref{NRP-r2}, we assume that there exists a point $\widetilde{x}\in\Omega$ such that
$$U_{\lambda}(\widetilde{x})=\min_{\Sigma_{\lambda}} U_{\lambda}=0.$$
Now we claim that
\begin{equation}\label{n-13}
 U_{\lambda}(x) \equiv 0 , \;\; x \in \Sigma_{\lambda}.
\end{equation}
If not, then
\begin{eqnarray}\label{n-14}
% \nonumber to remove numbering (before each equation)
  &&(-\Delta)_{p}^{s}u_{\lambda}(\widetilde{x})-(-\Delta)_{p}^{s}u(\widetilde{x})\nonumber\\
   &=&  C_{N,sp}\,PV\int_{\mathbb{R}^{N}}\frac{G(u_{\lambda}(\widetilde{x})-u_{\lambda}(y))-G(u(\widetilde{x})-u(y))
   }{|\widetilde{x}-y|^{N+sp}}dy \nonumber\\
   &=& C_{N,sp}\,PV\int_{\mathbb{R}^{N}}\frac{G(u(\widetilde{x})-u_{\lambda}(y))-G(u(\widetilde{x})-u(y))
   }{|\widetilde{x}-y|^{N+sp}}dy\nonumber\\
   &=& C_{N,sp}\,PV\int_{\Sigma_{\lambda}}\left[\frac{1}{|\widetilde{x}-y|^{N+sp}}-\frac{1}{|\widetilde{x}-y^{\lambda}|^{N+sp}}\right]
   \left[G(u(\widetilde{x})-u_{\lambda}(y))-G(u(\widetilde{x})-u(y))\right]dy \nonumber\\
   &<&0.
\end{eqnarray}
Combining the above inequality with \eqref{NRP-model} and $C_{2}(x)<0$, we derive
$$V_{\lambda}(\widetilde{x})<0$$
which is contradictive with \eqref{NRP-r1}.
 Thus, it follows from \eqref{n-13} and the anti-symmetry of $U_{\lambda}(x)$ that
\begin{equation}\label{n-15}
 U_{\lambda}(x)\equiv0 \quad \mbox{in} \
  \mathbb{R}^{n}.
\end{equation}
Applying \eqref{n-15}, \eqref{NRP-model} and \eqref{NRP-r1}, we obtain
\begin{equation*}
 V_{\lambda}(x)\equiv0 \quad \mbox{ in} \
  \Omega.
\end{equation*}
It remains to be proved $V_{\lambda}(x)\equiv0$ for $x\in\Sigma_{\lambda}\backslash\Omega$.
If not, the same argument as \eqref{n-14} deduces that
\begin{equation*}
  (-\Delta)_{p}^{t}v_{\lambda}(x)-(-\Delta)_{p}^{t}v(x)<0,
\end{equation*}
which is contradictive with \eqref{NRP-r1} by the second inequality in \eqref{NRP-model} and $C_{3}(x)<0$. A combination of $V_{\lambda}(x)\equiv0$ in $\Sigma_{\lambda}$ and the anti-symmetry of $V_{\lambda}(x)$ yields that
\begin{equation*}
 V_{\lambda}(x)\equiv0 \quad \mbox{ in} \
  \mathbb{R}^{n}.
\end{equation*}
Similarly, one can show that if $V_{\lambda}(x)=0$ at some point in $\Omega$, then both $U_{\lambda}(x)$ and $V_{\lambda}(x)$ are identically zero in $\mathbb{R}^{n}$.

For the unbounded narrow region $\Omega$, the condition
\begin{equation*}
\underset{|x| \rightarrow \infty}{\underline{\lim}} U_{\lambda}(x),\,V_{\lambda}(x)\geq0
\end{equation*}
guarantees that the negative minimum of $U_{\lambda}$ and $V_{\lambda}$ must be attained at some point $x^{0}$ and $x^{1}$, respectively, then we can derive the similar contradictions as above.

This completes the proof of Theorem \ref{NRP} .
\end{proof}

Furthermore, in order to carry on the direct method of moving planes in $\mathbb{R}^{N}$, we also need to construct the decay at infinity theorem.
We proceed by introducing the following useful technical lemma.
\begin{lemma}\label{Tech-Lem}{\rm (cf. \cite{CL})}
For $G(w)=|w|^{p-2}w$, it follows from mean value theorem that
$$G(w_{2})-G(w_{1})=G'(\zeta)(w_{2}-w_{1}).$$
Then there exists a positive constant $c_{0}$ such that
\begin{equation}\label{L-1}
  |\zeta|\geq c_{0}\max\left\{|w_{1}|,|w_{2}|\right\}.
\end{equation}
\end{lemma}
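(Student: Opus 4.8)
The plan is to reduce the desired bound to a one--variable estimate using the homogeneity of $G$, and then to close it by a compactness argument. First I would reduce to the case $|w_{2}|\geq|w_{1}|$ (the reverse case being symmetric), so that $\max\{|w_{1}|,|w_{2}|\}=|w_{2}|$; if $w_{2}=0$ then $w_{1}=0$ and the claim is trivial, so assume $w_{2}\neq0$ (and $w_{1}\neq w_{2}$, which is implicit in the mean value identity). Since $2<p<\infty$, we have $G\in C^{1}(\mathbb{R})$ with $G'(w)=(p-1)|w|^{p-2}$, and the identity $G(w_{2})-G(w_{1})=G'(\zeta)(w_{2}-w_{1})$ rewrites as
\[
(p-1)|\zeta|^{p-2}=\frac{G(w_{2})-G(w_{1})}{w_{2}-w_{1}}.
\]

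Next I would set $t:=w_{1}/w_{2}\in[-1,1)$ and use the positive homogeneity $G(w_{1})=|w_{2}|^{p-2}w_{2}\,|t|^{p-2}t$ to express the right--hand side as $|w_{2}|^{p-2}h(t)$, where
\[
h(t):=\frac{1-|t|^{p-2}t}{1-t},\qquad t\in[-1,1).
\]
Consequently $|\zeta|^{p-2}=\tfrac{1}{p-1}\,h(t)\,|w_{2}|^{p-2}$, so it suffices to bound $h$ from below by a positive constant depending only on $p$. Here one checks three things: $h$ is continuous on $[-1,1)$ since $1-t>0$ there and $s\mapsto|s|^{p-2}s$ is continuous ($p-2>0$); $h$ is positive, because $1-t^{p-1}>0$ for $t\in[0,1)$ while $1-|t|^{p-2}t>1$ for $t\in[-1,0)$; and, writing $\phi(s):=|s|^{p-2}s$, the quotient $h(t)=(\phi(1)-\phi(t))/(1-t)$ tends to $\phi'(1)=p-1>0$ as $t\to1^{-}$, so $h$ extends continuously to the compact interval $[-1,1]$. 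Hence $\mu:=\min_{[-1,1]}h>0$, and taking $c_{0}:=(\mu/(p-1))^{1/(p-2)}$ yields $|\zeta|\geq c_{0}|w_{2}|=c_{0}\max\{|w_{1}|,|w_{2}|\}$, with $c_{0}$ depending only on $p$.

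The only point that needs care is the behaviour of $h$ near $t=1$, where numerator and denominator vanish together; this removable singularity is resolved by recognizing $h$ as the difference quotient of the $C^{1}$ function $\phi$, whence $h$ extends continuously past $t=1$. Everything else is elementary sign--checking together with the extreme value theorem, so I do not foresee a serious obstacle --- the apparent degeneracy $G'(w)\to0$ as $w\to0$ is precisely what this lemma quantifies away, and the normalization $t=w_{1}/w_{2}$ is what makes that quantification uniform in $w_{1},w_{2}$.
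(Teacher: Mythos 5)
The paper does not actually prove Lemma~\ref{Tech-Lem}; it cites it from~\cite{CL} (Chen--Li) and moves on, so there is no internal proof to compare against. Your argument is a correct, self-contained proof of the stated bound.

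The mechanism you use --- normalize by $w_{2}$ to pass from the two-parameter inequality to a one-variable estimate on the compact set $[-1,1]$, then invoke continuity (including the removable singularity at $t=1$, resolved as the difference quotient of the $C^{1}$ function $\phi$) and the extreme value theorem --- is clean and gives $c_{0}$ explicitly as $(\mu/(p-1))^{1/(p-2)}$ with $\mu=\min_{[-1,1]}h$. All the auxiliary claims check out: the WLOG reduction to $|w_{2}|\ge|w_{1}|$ is legitimate because both the MVT identity and the conclusion are symmetric in $w_{1},w_{2}$; the degenerate cases $w_{2}=0$ (forces $w_{1}=0$) and $w_{1}=w_{2}$ (where $\zeta$ is not pinned down by the MVT, but the conclusion is vacuous or trivial with $c_{0}\le1$) are correctly set aside; the homogeneity identity $G(tw_{2})=(|w_{2}|^{p-2}w_{2})(|t|^{p-2}t)$ is exact; and $h$ is indeed strictly positive on $[-1,1)$ with limit $p-1$ at $t=1$. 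The common alternative in the literature is a direct case split on the signs of $w_{1},w_{2}$ together with explicit elementary inequalities for the difference quotient of $s\mapsto s^{p-1}$; your normalization-plus-compactness route handles all sign configurations at once, at the mild cost of not producing a closed-form value of $\mu$. Either way the constant depends only on $p$, which is what the applications in Theorem~\ref{Decay} and Theorem~\ref{Th1} require.
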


Now we turn to establish the decay at infinity theorem for the fractional $p$-Laplacian systems, which is important for the proof of Theorem \ref{Th1}\,.
\begin{theorem}\label{Decay}(\textbf{Decay at infinity})
Let $\Omega$ be an unbounded region in $\Sigma_{\lambda}$. Assume
that $u\in \mathcal{L}_{sp}\cap C^{1,1}_{loc}(\Omega)$ and $v\in
\mathcal{L}_{tp}\cap C^{1,1}_{loc}(\Omega)$, $U_{\lambda}$,
$V_{\lambda}$ are lower semi-continuous on $\overline{\Omega}$ and
satisfy
\begin{equation}\label{decay-model}
\left\{\begin{array}{r@{\ \ }c@{\ \ }ll}
&&\left(-\Delta\right)_{p}^{s}u_{\lambda}(x)-\left(-\Delta\right)_{p}^{s}u(x)+C_{1}(x)U_{\lambda}(x)+C_{2}(x)V_{\lambda}(x)\geq0, & \ \ x\in\Omega\,, \\[0.05cm]
&&\left(-\Delta\right)_{p}^{t}v_{\lambda}(x)-\left(-\Delta\right)_{p}^{t}v(x)+C_{3}(x)U_{\lambda}(x)+C_{4}(x)V_{\lambda}(x)\geq 0, & \ \ x\in\Omega\,, \\[0.05cm]
&&U_{\lambda}(x) \geq 0,\,  V_{\lambda}(x)\geq0, & \ \ x\in\Sigma_{\lambda}\backslash\Omega\,, \\[0.05cm]
&&U_{\lambda}(x^{\lambda})=-U_{\lambda}(x),\,  V_{\lambda}(x^{\lambda})=-V_{\lambda}(x), & \ \ x\in\Sigma_{\lambda}\,,
\end{array}\right.
\end{equation}
where $C_{1}(x),\,C_{4}(x)\geq0$ and $C_{2}(x),\,C_{3}(x)<0$ on $\Omega$ such that
\begin{equation}\label{decay-C23}
 \underset{|x| \rightarrow \infty}{\underline{\lim}}C_{2}(x)|x|^{\gamma(p-2)+sp}=0,\,
\underset{|x| \rightarrow \infty}{\underline{\lim}} C_{3}(x)|x|^{\tau(p-2)+tp}=0,
\end{equation}
where $\gamma$ and $\tau$ given in Theorem \ref{Th1}\,.
Then there exists a positive constant $R_{0}$ such that if
\begin{equation}\label{decay-UV}
  U_{\lambda}(x^{0})=\min_{\Omega} U_{\lambda}<0,\quad  V_{\lambda}(x^{1})=\min_{\Omega} V_{\lambda}<0,
\end{equation}
then at least one of $x^{0}$ and $x^{1}$ satisfies
\begin{equation}\label{decay-result}
  |x|\leq R_{0}.
\end{equation}
\end{theorem}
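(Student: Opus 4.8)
The plan is to argue by contradiction, reusing the machinery of the proof of Theorem~\ref{NRP} but substituting the decay hypothesis (v) of Theorem~\ref{Th1} for the narrowness of $\Omega$. Assume \eqref{decay-UV} holds, and suppose in addition that $|x^{0}|>R_{0}$ \emph{and} $|x^{1}|>R_{0}$ for a radius $R_{0}$ to be fixed large; I will derive a contradiction, which forces at least one of $x^{0},x^{1}$ into $B_{R_{0}}(0)$. Since $U_{\lambda}(x^{0})=\min_{\Omega}U_{\lambda}<0$ while $U_{\lambda}\geq0$ on $\Sigma_{\lambda}\setminus\Omega$ by \eqref{decay-model}, we have $U_{\lambda}(x^{0})=\min_{\Sigma_{\lambda}}U_{\lambda}$, and likewise $V_{\lambda}(x^{1})=\min_{\Sigma_{\lambda}}V_{\lambda}$. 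Running the computation \eqref{n-1}--\eqref{n-2} at $x^{0}$ I write $(-\Delta)_{p}^{s}u_{\lambda}(x^{0})-(-\Delta)_{p}^{s}u(x^{0})=C_{N,sp}(I_{1}+I_{2})$, where $I_{2}\leq0$ exactly as in Theorem~\ref{NRP} (its integrand is the product of the nonnegative kernel difference and the nonpositive factor $G(u_{\lambda}(x^{0})-u_{\lambda}(y))-G(u(x^{0})-u(y))$), and
\begin{equation*}
I_{1}=U_{\lambda}(x^{0})\int_{\Sigma_{\lambda}}\frac{G'(\zeta(y))+G'(\eta(y))}{|x^{0}-y^{\lambda}|^{N+sp}}\,dy ,
\end{equation*}
with $\zeta,\eta$ as in \eqref{n-2}. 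Where the proof of Theorem~\ref{NRP} used only $I_{1}\leq0$, here we need a quantitative negative upper bound for $I_{1}$.

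The key estimate, and the main obstacle, is a lower bound for the integral above. By (v) one has $c_{1}|z|^{-\gamma}\leq u(z)\leq c_{2}|z|^{-\gamma}$ for $|z|$ large; fix $M>1$ with $c_{2}(2M)^{-\gamma}<\tfrac12 c_{1}$, so that $u(z)\leq\tfrac12 u(x^{0})$ whenever $M|x^{0}|\leq|z|\leq2M|x^{0}|$ (for $|x^{0}|$ large). Discard everything in the integral except the set where $y\in\Sigma_{\lambda}$ and $M|x^{0}|\leq|y^{\lambda}|\leq2M|x^{0}|$; substituting $z=y^{\lambda}\in\Sigma_{\lambda}^{c}$ turns this into an integral over an annular region of measure $\gtrsim|x^{0}|^{N}$ (for $|x^{0}|$ large relative to $|\lambda|$) on which $u(x^{0})-u_{\lambda}(y)=u(x^{0})-u(z)\geq\tfrac12 u(x^{0})$, whence by Lemma~\ref{Tech-Lem} $|\zeta(y)|\geq\tfrac{c_{0}}{2}u(x^{0})$ and, since $p>2$, $G'(\zeta(y))=(p-1)|\zeta(y)|^{p-2}\geq C\,u(x^{0})^{p-2}\geq C|x^{0}|^{-\gamma(p-2)}$. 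Combined with $(M-1)|x^{0}|\leq|x^{0}-z|\leq(2M+1)|x^{0}|$ this gives $\int_{\Sigma_{\lambda}}\cdots\,dy\geq C\,|x^{0}|^{-\gamma(p-2)}|x^{0}|^{N}/|x^{0}|^{N+sp}=C|x^{0}|^{-\gamma(p-2)-sp}$, so that (using $U_{\lambda}(x^{0})<0$ and $I_{2}\leq0$)
\begin{equation*}
(-\Delta)_{p}^{s}u_{\lambda}(x^{0})-(-\Delta)_{p}^{s}u(x^{0})\ \leq\ \frac{C}{|x^{0}|^{\gamma(p-2)+sp}}\,U_{\lambda}(x^{0})\ <\ 0 ,
\end{equation*}
and the identical argument for $v$ at $x^{1}$ (using $v(x)\sim|x|^{-\tau}$) gives $(-\Delta)_{p}^{t}v_{\lambda}(x^{1})-(-\Delta)_{p}^{t}v(x^{1})\leq C|x^{1}|^{-\tau(p-2)-tp}\,V_{\lambda}(x^{1})<0$. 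Pinning down the annular region so that it simultaneously lies in $\Sigma_{\lambda}^{c}$, has measure $\gtrsim|x^{0}|^{N}$, and is far enough out to use the decay of $u$, is where essentially all the work sits.

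Finally, insert the first estimate into the first line of \eqref{decay-model}. As $C_{1}(x^{0})\geq0$ and $U_{\lambda}(x^{0})<0$, the term $C_{1}(x^{0})U_{\lambda}(x^{0})$ is nonpositive, so $0\leq C|x^{0}|^{-\gamma(p-2)-sp}U_{\lambda}(x^{0})+C_{2}(x^{0})V_{\lambda}(x^{0})$; since $C_{2}(x^{0})<0$ this is impossible if $V_{\lambda}(x^{0})\geq0$, hence $V_{\lambda}(x^{0})<0$ and, using $x^{0}\in\Omega$ and the minimality of $V_{\lambda}(x^{1})$,
\begin{equation*}
|V_{\lambda}(x^{1})|\ \geq\ |V_{\lambda}(x^{0})|\ \geq\ \frac{C\,|U_{\lambda}(x^{0})|}{(-C_{2}(x^{0}))\,|x^{0}|^{\gamma(p-2)+sp}} .
\end{equation*}
Running the same argument on the second line of \eqref{decay-model} at $x^{1}$ (with $C_{4}(x^{1})\geq0$, $V_{\lambda}(x^{1})<0$, $C_{3}(x^{1})<0$) forces $U_{\lambda}(x^{1})<0$ and $|U_{\lambda}(x^{0})|\geq|U_{\lambda}(x^{1})|\geq C|V_{\lambda}(x^{1})|\,/\,((-C_{3}(x^{1}))|x^{1}|^{\tau(p-2)+tp})$. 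Chaining the two inequalities and cancelling $|U_{\lambda}(x^{0})|>0$ yields
\begin{equation*}
\bigl((-C_{2}(x^{0}))\,|x^{0}|^{\gamma(p-2)+sp}\bigr)\bigl((-C_{3}(x^{1}))\,|x^{1}|^{\tau(p-2)+tp}\bigr)\ \geq\ C\ >\ 0 .
\end{equation*}
But by \eqref{decay-C23} each of the two factors on the left tends to $0$ as its base point runs off to infinity (the $\underline{\lim}$ there is in fact a genuine limit, these quantities being nonnegative), so once $R_{0}$ is chosen large this is impossible when $|x^{0}|>R_{0}$ and $|x^{1}|>R_{0}$. This contradiction shows that at least one of $x^{0},x^{1}$ satisfies $|x|\leq R_{0}$, which is \eqref{decay-result}.
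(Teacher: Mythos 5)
Your proof is correct and follows essentially the same route as the paper: decompose the difference of fractional $p$-Laplacians, discard the nonpositive kernel-difference piece, bound the remaining $I_{1}$ from below over a region of measure $\sim|x^{0}|^{N}$ far from $x^{0}$ using hypothesis (v) and Lemma~\ref{Tech-Lem} to get $(-\Delta)_p^s u_\lambda(x^0)-(-\Delta)_p^s u(x^0)\leq C U_\lambda(x^0)/|x^0|^{\gamma(p-2)+sp}$, then feed this into \eqref{decay-model} to force $V_\lambda(x^0)<0$, do the symmetric estimate at $x^1$, and chain the two to contradict \eqref{decay-C23}; the paper factors where you cancel, and uses a ball $B_R(x_R)$ where you use a half-annular shell, but these are cosmetic. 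One trivial slip: to ensure $u(z)\leq\tfrac12 u(x^0)$ on $M|x^0|\leq|z|\leq2M|x^0|$ you need $c_2 M^{-\gamma}\leq\tfrac12 c_1$ rather than $c_2(2M)^{-\gamma}<\tfrac12 c_1$, since the worst case is the inner radius $|z|=M|x^0|$.
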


\begin{proof}
The proof is carried out by contradiction. If \eqref{decay-result} is violated, then by the monotonicity of $G$ and mean value theorem, we can compute
\begin{eqnarray}\label{d-1}
% \nonumber to remove numbering (before each equation)
  &&(-\Delta)_{p}^{s}u_{\lambda}(x^{0})-(-\Delta)_{p}^{s}u(x^{0})\nonumber\\
   &=& C_{N,sp}\,PV\int_{\Sigma_{\lambda}}\left[\frac{1}{|x^{0}-y|^{N+sp}}-\frac{1}{|x^{0}-y^{\lambda}|^{N+sp}}\right]
   \left[G(u_{\lambda}(x^{0})-u_{\lambda}(y))-G(u(x^{0})-u(y))\right]dy \nonumber\\
   && + C_{N,sp}\int_{\Sigma_{\lambda}}\frac{\left[G(u_{\lambda}(x^{0})-u_{\lambda}(y))-G(u(x^{0})-u_{\lambda}(y))\right]
   +\left[G(u_{\lambda}(x^{0})-u(y))-G(u(x^{0})-u(y))\right]}{|x^{0}-y^{\lambda}|^{N+sp}}dy\nonumber\\
   &\leq& C_{N,sp}\int_{\Sigma_{\lambda}}\frac{\left[G(u_{\lambda}(x^{0})-u_{\lambda}(y))-G(u(x^{0})-u_{\lambda}(y))\right]
   +\left[G(u_{\lambda}(x^{0})-u(y))-G(u(x^{0})-u(y))\right]}{|x^{0}-y^{\lambda}|^{N+sp}}dy\nonumber\\
   &=& C_{N,sp}\,U_{\lambda}(x^{0})\int_{\Sigma_{\lambda}}\frac{G'\left(\zeta(y)\right)+G'\left(\eta(y)\right)}{|x^{0}-y^{\lambda}|^{N+sp}}dy,
\end{eqnarray}where $\zeta(y)\in \left(u_{\lambda}(x^{0})-u_{\lambda}(y),\,u(x^{0})-u_{\lambda}(y)\right)$ and $\eta(y)\in \left(u_{\lambda}(x^{0})-u(y),\,u(x^{0})-u(y)\right)$.
Let $R=|x^0|$ and $x_{R}^{\lambda}=(x_{1}^{0}+(M+1)|x^{0}|,x_{2}^{0},...,x_{N}^{0})$, then $|x^{\lambda}_R|\geq M R$.
Here $M$ is a sufficiently large number such that
$$B_R(x_R) \subset \Sigma_\lambda \, \mbox{and}\, B_R(x_R^{\lambda}) \subset \Sigma_\lambda^{C}$$
for fixed $\lambda$. Moreover, the $M$ guarantees that
\begin{equation}\label{d-2}
 u(y) \leq \frac{C}{M^\gamma R^\gamma} \leq \frac{c}{R^{\gamma}} \leq u(x^{0})
\end{equation}
for any $y \in B_R(x_R^{\lambda})$ by (v) in Theorem \ref{Th1}\,.
Hence, a combination of \eqref{d-1}, \eqref{d-2} and Lemma \ref{Tech-Lem} yields that
\begin{eqnarray*}
% \nonumber to remove numbering (before each equation)
  && C_{N,sp}\,U_{\lambda}(x^{0})\int_{\Sigma_{\lambda}}\frac{G'\left(\zeta(y)\right)+G'\left(\eta(y)\right)}{|x^{0}-y^{\lambda}|^{N+sp}}dy\\
   &\leq&  C_{N,sp}\,U_{\lambda}(x^{0})\int_{B_R(x_R)}\frac{G'\left(\zeta(y)\right)}{|x^{0}-y^{\lambda}|^{N+sp}}dy \\
   &=&  C_{N,sp}\,(p-1)U_{\lambda}(x^{0})\int_{B_R(x_R)}\frac{\left|\zeta(y)\right|^{p-2}}{|x^{0}-y^{\lambda}|^{N+sp}}dy \\
   &\leq& C_{N,sp}\,c_{0}^{p-2}(p-1)U_{\lambda}(x^{0})\int_{B_R(x_R)}\frac{\left|u(x^{0})-u_{\lambda}(y)\right|^{p-2}}{|x^{0}-y^{\lambda}|^{N+sp}}dy \\
  &\leq& C_{N,sp}\,c_{0}^{p-2}(p-1)U_{\lambda}(x^{0})\int_{B_R(x_R^{\lambda})}\frac{\left|u(x^{0})-\frac{C}{M^{\gamma}c}u(x^{0})\right|^{p-2}}{|x^{0}-y|^{N+sp}}dy \\
   &\leq& C \,U_{\lambda}(x^{0})\int_{B_R(x_R^{\lambda})}\frac{u^{p-2}(x^{0})}{|x^{0}-y|^{N+sp}}dy\\
   &\leq& C \,\frac{U_{\lambda}(x^{0})}{R^{\gamma(p-2)+sp}}.
\end{eqnarray*}
That is to say,
\begin{equation}\label{d-3}
  (-\Delta)_{p}^{s}u_{\lambda}(x^{0})-(-\Delta)_{p}^{s}u(x^{0})\leq C \,\frac{U_{\lambda}(x^{0})}{|x^{0}|^{\gamma(p-2)+sp}}.
\end{equation}
Applying the first inequality in \eqref{decay-model} and $C_{1}(x)\geq0$, we derive
\begin{equation}\label{d-4}
  U_{\lambda}(x^{0})\geq-CC_{2}(x^{0})|x^{0}|^{\gamma(p-2)+sp}V_{\lambda}(x^{0}).
\end{equation}
Then it follows from $C_{2}(x)<0$ that
\begin{equation}\label{d-5}
  V_{\lambda}(x^{0})<0.
\end{equation}

In terms of (v) in Theorem \ref{Th1}\,, \eqref{d-5} and the lower
semi-continuity of $V_{\lambda}$ on $\overline{\Omega}$, we can show
that there exists $x^{1}\in\Omega$ such that
\begin{equation*}
   V_{\lambda}(x^{1})=\min_{\Omega} V_{\lambda}<0
\end{equation*}
for sufficiently large $|x^{1}|$. By proceeding similarly as
\eqref{d-3}, we have
\begin{equation}\label{d-6}
  (-\Delta)_{p}^{t}v_{\lambda}(x^{1})-(-\Delta)_{p}^{t}v(x^{1})\leq C \,\frac{V_{\lambda}(x^{1})}{|x^{1}|^{\tau(p-2)+tp}}.
\end{equation}

Finally, utilizing the second inequality in \eqref{decay-model}, \eqref{d-6}, $C_{2}(x),\,C_{3}(x)<0$, $C_{4}(x)\geq0$, \eqref{d-4} and \eqref{decay-C23}, we can conclude a contradiction as follows
\begin{eqnarray*}
% \nonumber to remove numbering (before each equation)
  0 &\leq& \left(-\Delta\right)_{p}^{t}v_{\lambda}(x^{1})-\left(-\Delta\right)_{p}^{t}v(x^{1})+C_{3}(x^{1})U_{\lambda}(x^{1})
  +C_{4}(x^{1})V_{\lambda}(x^{1}) \\
   &\leq& C \,\frac{V_{\lambda}(x^{1})}{|x^{1}|^{\tau(p-2)+tp}}+ C_{3}(x^{1})U_{\lambda}(x^{0}) \\
   &\leq&C \,\frac{V_{\lambda}(x^{1})}{|x^{1}|^{\tau(p-2)+tp}}-CC_{3}(x^{1})C_{2}(x^{0})|x^{0}|^{\gamma(p-2)+sp}V_{\lambda}(x^{1}) \\
  &=&  \frac{ V_{\lambda}(x^{1})}{|x^{1}|^{\tau(p-2)+tp}}\left(C
  -CC_{2}(x^{0})|x^{0}|^{\gamma(p-2)+sp}C_{3}(x^{1})|x^{1}|^{\tau(p-2)+tp}\right)\\
  &<&0.
\end{eqnarray*}
for sufficiently large $|x^{0}|$ and $|x^{1}|$. Hence, the relation \eqref{decay-result} must be valid for at least one of $x^{0}$ and $x^{1}$. The proof of Theorem \ref{Decay} is completed.
\end{proof}

\begin{remark}
We believe that Theorem \ref{NRP}\,, \ref{Decay} and the arguments behind the proof will be applied to other nonlinear nonlocal systems with fractional $p$-Laplacian.
\end{remark}

\section{Radial Symmetry of Positive Solutions}\label{section3}
In this section, we establish the radial symmetry of positive solutions to \eqref{model1} in the whole space and the unit ball (i.e, Theorem \ref{Th1} and \ref{Th2}\,) based on the direct method of moving planes. We start by proving Theorem \ref{Th1}\,.

\begin{proof}[Proof of Theorem \ref{Th1}]Choosing a direction to be $x_{1}$-direction and keeping the notations $T_{\lambda}$, $\Sigma_{\lambda}$, $x_{\lambda}$, $U_{\lambda}$ and $V_{\lambda}$ defined in Section \ref{section2}\,, we divide the proof into two steps.

\noindent \textup{\textbf{Step 1.}} Start moving the plane $T_{\lambda}$ from $-\infty$ to the right along the $x_{1}$-axis. We first argue that the assertion
\begin{equation}\label{r-1}
  U_{\lambda}(x),\, V_{\lambda}(x)\geq0, \quad x\in\Sigma_{\lambda}
\end{equation}
is true for sufficiently negative $\lambda$.

If \eqref{r-1} is violated, without loss of generality, we assume that there exists an $x^0 \in \Sigma_\lambda$ such that
$$ U_{\lambda}(x^0) = \min_{\Sigma_\lambda} U_\lambda < 0 .$$
By proceeding similarly as \eqref{d-3}, we get
\begin{equation}\label{r-2}
  (-\Delta)_{p}^{s}u_{\lambda}(x^{0})-(-\Delta)_{p}^{s}u(x^{0})\leq C \,\frac{U_{\lambda}(x^{0})}{|x^{0}|^{\gamma(p-2)+sp}}.
\end{equation}
Now we show that
\begin{equation}\label{r-3}
V_{\lambda}(x^{0})<0.
\end{equation}
If not, applying the assumptions $a>0$, (i), (ii), (iv), (v) in Theorem \ref{Th1} and combining with mean value theorem, we obtain
\begin{eqnarray}\label{r-4}
% \nonumber to remove numbering (before each equation)
  &&(-\Delta)_{p}^{s}u_{\lambda}(x^{0})-(-\Delta)_{p}^{s}u(x^{0}) \nonumber\\
  &=& f\left(u_{\lambda}(x^{0}),v_{\lambda}(x^{0})\right)-au_{\lambda}^{p-1}(x^{0})-\left(f\left(u(x^{0}),v(x^{0})\right)-au^{p-1}(x^{0})\right)\nonumber\\
  &=&\left(\frac{\partial f}{\partial u}(\xi_{1},v(x^{0}))-a(p-1)\xi_{1}^{p-2}\right)U_{\lambda}(x^{0})+\left(f\left(u_{\lambda}(x^{0}),v_{\lambda}(x^{0})\right)-f\left(u_{\lambda}(x^{0}),v(x^{0})\right)\right)
  \nonumber\\
  &\geq&\left(\frac{\partial f}{\partial u}(u(x^{0}),v(x^{0}))-a(p-1)u^{p-2}(x^{0})\right)U_{\lambda}(x^{0})\nonumber\\
  &\geq&u^{m-1}(x^{0})v^{n}(x^{0})U_{\lambda}(x^{0})\nonumber\\
  &\geq&\frac{C}{|x^{0}|^{\gamma(m-1)+\tau n}}U_{\lambda}(x^{0})
\end{eqnarray}
for sufficiently negative $\lambda$, where $\xi_{1}\in\left(u_{\lambda}(x^{0}),\,u(x^{0})\right)$. Note that \eqref{r-4} contradicts with \eqref{r-2}, which is ensured by $\gamma(m-1)+\tau n>\gamma(p-2)+sp$. Thus, \eqref{r-3} holds. In terms of \eqref{r-3} and (v), we can conclude there exists an $x^1 \in \Sigma_\lambda$ such that
$$ V_{\lambda}(x^1) = \min_{\Sigma_\lambda} V_\lambda < 0 .$$
In analogy with the above argument, then (i), (iii), (iv), (v) and $\tau(r-1)+\gamma q>\tau(p-2)+tp$ are necessary to guarantee the validity of
$U_{\lambda}(x^{1})<0$.

Thus, in terms of the above estimates, mean value theorem, (ii), (iii) and (iv), we can derive
\begin{eqnarray}\label{r-6}
% \nonumber to remove numbering (before each equation)
   &&(-\Delta)_{p}^{s}u_{\lambda}(x^{0})-(-\Delta)_{p}^{s}u(x^{0})\nonumber\\
    &=&
  \left(\frac{\partial f}{\partial u}(\xi_{1},v(x^{0}))-a(p-1)\xi_{1}^{p-2}\right)U_{\lambda}(x^{0})+\frac{\partial f}{\partial v}(u_{\lambda}(x^{0}),\eta_{1})V_{\lambda}(x^{0})\nonumber\\
  &\geq& \left(u^{m-1}(x^{0})v^{n}(x^{0})-a(p-1)u^{p-2}(x^{0})\right)U_{\lambda}(x^{0})+u^{m}(x^{0})v^{n-1}(x^{0})V_{\lambda}(x^{0}),
\end{eqnarray}
and
\begin{eqnarray}\label{r-7}
  &&(-\Delta)_{p}^{t}v_{\lambda}(x^{1})-(-\Delta)_{p}^{t}v(x^{1})\nonumber\\
  & =&
  \frac{\partial g}{\partial u}(\xi_{2},v_{\lambda}(x^{1}))U_{\lambda}(x^{1})+\left(\frac{\partial g}{\partial v}(u(x^{1}),\eta_{2})-b(p-1)\eta_{2}^{p-2}\right)
V_{\lambda}(x^{1})\nonumber\\
&\geq&u^{q-1}(x^{1})v^{r}(x^{1})U_{\lambda}(x^{1})+\left(u^{q}(x^{1})v^{r-1}(x^{1})-b(p-1)v^{p-2}(x^{1})\right)V_{\lambda}(x^{1}),
\end{eqnarray}
where $\xi_{1}\in\left(u_{\lambda}(x^{0}),\,u(x^{0})\right)$, $\eta_{1}\in\left(v_{\lambda}(x^{0}),\,v(x^{0})\right)$, $\xi_{2}\in\left(u_{\lambda}(x^{1}),\,u(x^{1})\right)$ and $\eta_{2}\in\left(v_{\lambda}(x^{1}),\,v(x^{1})\right)$, respectively.
Let
\begin{eqnarray*}
% \nonumber to remove numbering (before each equation)
  C_{1}(x^{0}) &=&a(p-1)u^{p-2}(x^{0})-u^{m-1}(x^{0})v^{n}(x^{0})\\
      &\sim& \frac{a(p-1)}{|x^{0}|^{\gamma (p-2)}}-\frac{1}{|x^{0}|^{\gamma (m-1)+\tau n}}\geq0,
\end{eqnarray*}
\begin{eqnarray*}
% \nonumber to remove numbering (before each equation)
  0>C_{2}(x^{0}) &=& -u^{m}(x^{0})v^{n-1}(x^{0})\\
    &\sim& \frac{1}{|x^{0}|^{\gamma m+\tau(n-1)}},
\end{eqnarray*}
\begin{eqnarray*}
% \nonumber to remove numbering (before each equation)
  0>C_{3}(x^{1}) &=& -u^{q-1}(x^{1})v^{r}(x^{1})\\
   &\sim& \frac{1}{|x^{1}|^{\gamma(q-1)+\tau r}},
\end{eqnarray*}
and
\begin{eqnarray*}
% \nonumber to remove numbering (before each equation)
 C_{4}(x^{1}) &=& b(p-1)v^{p-2}(x^{1})-u^{q}(x^{1})v^{r-1}(x^{1})\\
   &\sim& \frac{b(p-1)}{|x^{1}|^{\tau (p-2)}}-\frac{1}{|x^{1}|^{\gamma q+\tau (r-1)}}\geq0,
\end{eqnarray*}
for sufficiently negative $\lambda$, which are ensured by $a,\,b>0$, $p>2$, $m,\,r\geq p-1$ and (v). Then by virtue of the proof of Theorem \ref{Decay}\,, \eqref{parameter1} and \eqref{parameter2}, it implies that one of $U_{\lambda}(x)$ and $V_{\lambda}(x)$ must be nonnegative in $\Sigma_{\lambda}$ for sufficiently negative $\lambda$. Without loss of generality, we can suppose that
\begin{equation}\label{r-8}
  U_{\lambda}(x)\geq 0, \quad x\in\Sigma_{\lambda}.
\end{equation}
To show that \eqref{r-8} also holds for $V_{\lambda}(x)$, we argue by contradiction again. If $V_{\lambda}(x)$ is negative at some point in $\Sigma_{\lambda}$, then (v) guarantees there exists an $x^1 \in \Sigma_\lambda$ such that
$$ V_{\lambda}(x^1) = \min_{\Sigma_\lambda} V_\lambda < 0 .$$
From \eqref{d-6} and the similar argument as \eqref{r-4}, we derive
\begin{equation*}
  \frac{C V_{\lambda}(x^{1})}{|x^{1}|^{\tau(p-2)+tp}}\geq (-\Delta)_{p}^{t}v_{\lambda}(x^{1})-(-\Delta)_{p}^{t}v(x^{1})
  \geq \frac{C V_{\lambda}(x^{1})}{|x^{1}|^{\gamma q+\tau (r-1)}},
\end{equation*}
then $\tau(p-2)+tp<\gamma q+\tau (r-1)$ deduces a contradiction for sufficiently negative $\lambda$. Hence, \eqref{r-1} is true, which provides a starting point to move the plane $T_{\lambda}$.

\noindent \textup{\textbf{Step 2.}} Continue to move the plane $T_{\lambda}$ to the right along the $x_{1}$-axis as long as \eqref{r-1} holds
 to its limiting position . More precisely, let
 \begin{equation*}
 \lambda_0 := \sup \{ \lambda \mid U_\mu (x) \geq 0,\, V_\mu (x) \geq 0, \; x \in \Sigma_\mu, \; \mu \leq \lambda \},
 \end{equation*}
 then the behavior of $u$ and $v$ at infinity guarantee $\lambda_{0}<\infty$.

Next, we claim that $u$ is symmetric about the limiting plane $T_{\lambda_0}$, that is to say
\begin{equation}\label{r-9}
 U_{\lambda_0}(x)= V_{\lambda_0}(x)  \equiv 0 , \;\; x \in \mathbb{R}^{N}.
 \end{equation}
By the definition of $\lambda_{0}$, we first show that either
$$U_{\lambda_0}(x)= V_{\lambda_0}(x)  \equiv 0 , \;\; x \in \Sigma_{\lambda_0},$$
or
$$U_{\lambda_0}(x),\,V_{\lambda_0}(x) > 0 , \;\; x \in \Sigma_{\lambda_0}.$$
To prove this, without loss of generality,
we assume there a point $\widetilde{x}\in\Sigma_{\lambda_{0}}$ such that
$$U_{\lambda_{0}}(\widetilde{x})= \min_{\Sigma_{\lambda_{0}}} U_{\lambda_{0}}=0,$$
then it must be revealed that
$$ U_{\lambda_0}(x) \equiv 0 , \;\; x \in \Sigma_{\lambda_0}.$$
If not, on one hand
\begin{eqnarray*}
% \nonumber to remove numbering (before each equation)
  &&(-\Delta)_{p}^{s}u_{\lambda_{0}}(\widetilde{x})-(-\Delta)_{p}^{s}u(\widetilde{x})\nonumber\\
   &=&  C_{N,sp}\,PV\int_{\mathbb{R}^{N}}\frac{G(u_{\lambda_{0}}(\widetilde{x})-u_{\lambda_{0}}(y))-G(u(\widetilde{x})-u(y))
   }{|\widetilde{x}-y|^{N+sp}}dy \nonumber\\
   &=& C_{N,sp}\,PV\int_{\mathbb{R}^{N}}\frac{G(u(\widetilde{x})-u_{\lambda_{0}}(y))-G(u(\widetilde{x})-u(y))
   }{|\widetilde{x}-y|^{N+sp}}dy\nonumber\\
   &=& C_{N,sp}\,PV\int_{\Sigma_{\lambda_{0}}}\left[\frac{1}{|\widetilde{x}-y|^{N+sp}}-\frac{1}{|\widetilde{x}-y^{\lambda_{0}}|^{N+sp}}\right]
   \left[G(u(\widetilde{x})-u_{\lambda_{0}}(y))-G(u(\widetilde{x})-u(y))\right]dy \nonumber\\
   &<&0.
\end{eqnarray*}
On the other hand,
\begin{equation*}
% \nonumber to remove numbering (before each equation)
   (-\Delta)_{p}^{s}u_{\lambda_{0}}(\widetilde{x})-(-\Delta)_{p}^{s}u(\widetilde{x}) = \frac{\partial f}{\partial v}(u_{\lambda_{0}}(\widetilde{x}),\eta_{1})V_{\lambda_{0}}(\widetilde{x})\geq0,
\end{equation*}
which deduces a contradiction. Then it follows from the anti-symmetry of $U_{\lambda}$ that
$$ U_{\lambda_0}(x) \equiv 0 , \;\; x \in \mathbb{R}^{n},$$
which can deduce $V_{\lambda_0}(\widetilde{x})=0$. In analogy with
the above estimates, we can also derive
$$ V_{\lambda_0}(x) \equiv 0 , \;\; x \in \mathbb{R}^{n}.$$
Therefore, if \eqref{r-9} is violated, then we only have the case that
\begin{equation}\label{r-10}
U_{\lambda_0}(x),\,V_{\lambda_0}(x) > 0 , \;\; x \in \Sigma_{\lambda_0}.
\end{equation}

In the sequel, we prove that the plane can still move further in this case. To be more rigorous, there exists $\varepsilon>0$ such that
\begin{equation}\label{r-11}
  U_{\lambda}(x),\,V_{\lambda}(x) \geq 0 , \;\; x \in \Sigma_{\lambda}
\end{equation}
for any $\lambda\in \left(\lambda_{0},\lambda_{0}+\varepsilon\right)$. This is a contradiction with the definition of $\lambda_{0}$, then \eqref{r-9} holds.

Now we prove the assertion \eqref{r-11}.
From \eqref{r-10}, we have the following bounded away from zero estimate
\begin{equation*}
  U_{\lambda_{0}}(x),\,V_{\lambda_{0}}(x) \geq C_{\delta}>0 , \;\; x \in \Sigma_{\lambda_{0}-\delta}\cap B_{R_{0}}(0)
\end{equation*}
for some $R_{0}>0$. By the continuity of $U_{\lambda}(x)$ and $V_{\lambda}(x)$ with respect to $\lambda$, there exists a positive constant $\varepsilon$ such that
\begin{equation*}
  U_{\lambda}(x),\,V_{\lambda}(x) \geq 0 , \;\; x \in \Sigma_{\lambda_{0}-\delta}\cap B_{R_{0}}(0)
\end{equation*}
for any $\lambda\in \left(\lambda_{0},\lambda_{0}+\varepsilon\right)$.
Moreover, by virtue of Theorem \ref{Decay} , we know that if
 $$U_{\lambda}(x^0) = \min_{\Sigma_\lambda} U_\lambda < 0 \quad \mbox{and} \quad V_{\lambda}(x^1) = \min_{\Sigma_\lambda} V_\lambda < 0,$$
 then there exists a positive constant $R_{0}$ large enough such that one of $x^{0}$ and $x^{1}$ must be in $B_{R_{0}}(0)$. We may as well suppose $|x^{0}|<R_{0}$. Thus, we obtain
\begin{equation}\label{r-12}
  x^{0}\in\left(\Sigma_{\lambda}\backslash\Sigma_{\lambda_{0}-\delta}\right)\cap B_{R_{0}}(0).
\end{equation}
Next, we show that \eqref{r-12} also holds for $x^{1}$. If $x^{1}\in \Sigma_{\lambda}\cap B^{C}_{R_{0}}(0)$, then by virtue of \eqref{d-6}, \eqref{r-6}, \eqref{r-7}, (i), (iii), (iv), (v), $b>0$, $p>2$, $r\geq p-1$ and \eqref{n-8}, we have
\begin{eqnarray}\label{r-14}
 \frac{C V_{\lambda}(x^{1})}{|x^{1}|^{\tau(p-2)+tp}}&\geq& (-\Delta)_{p}^{t}v_{\lambda}(x^{1})-(-\Delta)_{p}^{t}v(x^{1})\nonumber\\
 &=&  \frac{\partial g}{\partial u}(\xi_{2},v_{\lambda}(x^{1}))U_{\lambda}(x^{1})+\left(\frac{\partial g}{\partial v}(u(x^{1}),\eta_{2})-b(p-1)\eta_{2}^{p-2}\right)
V_{\lambda}(x^{1})\nonumber\\
 &\geq&\frac{\partial g}{\partial u}(\xi_{2},v_{\lambda}(x^{1}))U_{\lambda}(x^{0})+\left(u^{q}(x^{1})v^{r-1}(x^{1})-b(p-1)v^{p-2}(x^{1})\right)V_{\lambda}(x^{1})\nonumber\\
  &\geq&u^{q-1}(x^{1})v^{r}(x^{1})U_{\lambda}(x^{0})+\left(\frac{C}{|x^{1}|^{\gamma q+\tau(r-1)}}-\frac{Cb(p-1)}{|x^{1}|^{(p-2)\tau}}\right)V_{\lambda}(x^{1})\nonumber\\
       &\geq&\frac{C}{|x^{1}|^{\gamma(q-1)+\tau r}}U_{\lambda}(x^{0})
\end{eqnarray}
and
\begin{eqnarray}\label{r-15}
% \nonumber to remove numbering (before each equation)
  -C\delta_{x^{0}}&\geq&  (-\Delta)_{p}^{s}u_{\lambda}(x^{0})-(-\Delta)_{p}^{s}u(x^{0}) \nonumber\\
   &=&  \left(\frac{\partial f}{\partial u}(\xi_{1},v(x^{0}))-a(p-1)\xi_{1}^{p-2}\right)U_{\lambda}(x^{0})+\frac{\partial f}{\partial v}(u_{\lambda}(x^{0}),\eta)V_{\lambda}(x^{0})
\end{eqnarray}
for sufficiently small $\delta$ and $\varepsilon$ and large $R_{0}$,
where $\xi_{1}\in\left(u_{\lambda}(x^{0}),\,u(x^{0})\right)$,
$\xi_{2}\in\left(u_{\lambda}(x^{1}),\,u(x^{1})\right)$ and $\eta\in
\left(v_{\lambda}(x^{0}),v(x^{0})\right)$. Hence, utilizing the
above inequalities, (i) and \eqref{n-10}, we derive
\begin{eqnarray}\label{r-13}
% \nonumber to remove numbering (before each equation)
  1&\leq& -\frac{C}{\delta_{x^{0}}}\left[\left(\frac{\partial f}{\partial u}(\xi_{1},v(x^{0}))-a(p-1)\xi_{1}^{p-2}\right)U_{\lambda}(x^{0})+ \frac{\partial f}{\partial v}(u_{\lambda}(x^{0}),\eta)V_{\lambda}(x^{1})\right]\nonumber \\
  &\leq& -\frac{C}{\delta_{x^{0}}}\left[\left(\frac{\partial f}{\partial u}(\xi_{1},v(x^{0}))-a(p-1)\xi_{1}^{p-2}\right)U_{\lambda}(x^{0})+ C\frac{|x^{1}|^{\tau(p-2)+tp}}{|x^{1}|^{\gamma(q-1)+\tau r}}\frac{\partial f}{\partial v}(u_{\lambda}(x^{0}),\eta)U_{\lambda}(x^{0})\right]\nonumber \\
   &=& -\frac{C}{\delta_{x^{0}}}U_{\lambda}(x^{0})\left[\left(\frac{\partial f}{\partial u}(\xi_{1},v(x^{0}))-a(p-1)\xi_{1}^{p-2}\right)+ C\frac{\partial f}{\partial v}(u_{\lambda}(x^{0}),\eta)\frac{|x^{1}|^{\tau(p-2)+tp}}{|x^{1}|^{\gamma(q-1)+\tau r}}\right] \nonumber\\
   &\leq& C \,o(1) \left[\left(\frac{\partial f}{\partial u}(\xi_{1},v(x^{0}))-a(p-1)\xi_{1}^{p-2}\right)+ \frac{\partial f}{\partial v}(u_{\lambda}(x^{0}),\eta)\frac{|x^{1}|^{\tau(p-2)+tp}}{|x^{1}|^{\gamma(q-1)+\tau r}}\right]
\end{eqnarray}
for sufficiently small $\delta$, $\varepsilon$ and large $R_{0}$.  Note that
$$\left(\frac{\partial f}{\partial u}(\xi_{1},v(x^{0}))-a(p-1)\xi_{1}^{p-2}\right)+ \frac{\partial f}{\partial v}(u_{\lambda}(x^{0}),\eta)\frac{|x^{1}|^{\tau(p-2)+tp}}{|x^{1}|^{\gamma(q-1)+\tau r}}$$
is bounded, which is ensured by $ \tau r+\gamma(q-1)>\tau(p-2)+tp$, $|x^{1}|>R_{0}$, $x^{0}\in\left(\Sigma_{\lambda}\backslash\Sigma_{\lambda_{0}-\delta}\right)\cap B_{R_{0}}(0)$ and $f\in C^{1}$. Hence, \eqref{r-13} must not be valid for sufficiently small $\delta$ and $\varepsilon$. This contradiction deduces that
 $$x^{1}\in\left(\Sigma_{\lambda}\backslash\Sigma_{\lambda_{0}-\delta}\right)\cap B_{R_{0}}(0).$$
In terms of Theorem \ref{NRP}\,, we can conclude that
\begin{equation*}
  U_{\lambda}(x),\,V_{\lambda}(x) \geq 0 , \;\; x \in \left(\Sigma_{\lambda}\backslash\Sigma_{\lambda_{0}-\delta}\right)\cap B_{R_{0}}(0)
\end{equation*}
for sufficiently small $\delta$ and $\varepsilon$. Hence, \eqref{r-11} holds.

Therefore, the above contradiction means that
$$ U_{\lambda_0}(x)= V_{\lambda_0}(x)  \equiv 0 , \;\; x \in \Sigma_{\lambda_0}.$$
Since $x_1$ direction can be chosen arbitrarily, so we can conclude that the positive solution pair $u$ and $v$ must be radially symmetric and monotone decreasing with respect to some point in $\mathbb{R}^{n}$. This completes the proof of the Theorem \ref{Th1} .
\end{proof}

We now turn our attention to prove Theorem \ref{Th2}\,.
\begin{proof}[Proof of Theorem \ref{Th2}]Choosing a direction to be $x_{1}$-direction and start moving the plane $T_{\lambda}$ from $-1$ to the right along the $x_{1}$-axis, we proceed in two steps and first argue that the assertion
\begin{equation}\label{b-1}
  U_{\lambda}(x),\, V_{\lambda}(x)\geq0, \quad x\in\Omega_{\lambda}
\end{equation}
is true for $\lambda>-1$ sufficiently closing to $-1$, where
$\Omega_{\lambda} :=\{x \in B_{1}(0) \mid \, x_1<\lambda\}$. After a direct calculation, we have
\begin{equation}\label{b-2}
   (-\Delta)_{p}^{s}u_{\lambda}(x)-(-\Delta)_{p}^{s}u(x)+C_{1}(x)U_{\lambda}(x)+C_{2}(x)V_{\lambda}(x)=0
\end{equation}
and
\begin{equation}\label{b-3}
    (-\Delta)_{p}^{t}v_{\lambda}(x)-(-\Delta)_{p}^{t}v(x)+C_{3}(x)U_{\lambda}(x)+C_{4}(x)V_{\lambda}(x)=0,
\end{equation}
where
\begin{eqnarray*}
% \nonumber to remove numbering (before each equation)
  C_{1}(x) &=& a(p-1)\xi^{p-2}-\frac{f(u_{\lambda}(x),v(x))-f(u(x),v(x))}{u_{\lambda}(x)-u(x)} \\
  &\geq&-\frac{f(u_{\lambda}(x),v(x))-f(u(x),v(x))}{u_{\lambda}(x)-u(x)},\\
  C_{2}(x) &=& -\frac{f(u_{\lambda}(x),v_{\lambda}(x))-f(u_{\lambda}(x),v(x))}{v_{\lambda}(x)-v(x)}, \\
  C_{3}(x) &=& -\frac{g(u_{\lambda}(x),v_{\lambda}(x))-g(u(x),v_{\lambda}(x))}{u_{\lambda}(x)-u(x)}, \\
  C_{4}(x) &=& b(p-1)\eta^{p-2}-\frac{g(u(x),v_{\lambda}(x))-g(u(x),v(x))}{v_{\lambda}(x)-v(x)} \\
  &\geq& -\frac{g(u(x),v_{\lambda}(x))-g(u(x),v(x))}{v_{\lambda}(x)-v(x)}.
\end{eqnarray*}
for $U_{\lambda}(x),\,V_{\lambda}(x)\neq0$. Here $\xi$ is between
$u(x)$ and $u_{\lambda}(x)$, $\eta$ is between $v(x)$ and
$v_{\lambda}(x)$. Applying the assumptions that $f,\,g$ are
Lipschitz continuous and combining \eqref{f} with \eqref{g}, we show
that $C_{1}(x)$, $C_{2}(x)$, $C_{3}(x)$, $C_{4}(x)$ have lower
bounds and $C_{2}(x),\,C_{3}(x)<0$ in $\Omega_{\lambda}$. Besides, a
combination of $u,\,v>0$ on $B_{1}(0)$ and $u,\,v\equiv0$ on
$\mathbb{R}^{N} \backslash B_{1}(0)$ yields that the additional
conditions in Theorem \ref{NRP} are automatically satisfied. Hence,
in terms of Theorem \ref{NRP}\,, we conclude the assertion
\eqref{b-1} holds for $\lambda>-1$ sufficiently closing to $-1$.

Next, we continue to move the plane $T_{\lambda}$ to the right along
the $x_{1}$-axis until its limiting position as long as \eqref{b-1}
holds. More precisely, defining
 \begin{equation*}
 \lambda_0 := \sup \{ \lambda\leq0 \mid U_\mu (x) \geq 0,\, V_\mu (x) \geq 0, \; x \in \Omega_\mu, \; \mu \leq \lambda \}.
 \end{equation*}
We now claim that
\begin{equation}\label{b-4}
  \lambda_{0}=0.
\end{equation}
If not, then we will prove that the plane can still move further such that \eqref{b-1} holds. To be more rigorous, there exists $\varepsilon>0$ such that
\begin{equation}\label{b-5}
  U_{\lambda}(x),\,V_{\lambda}(x) \geq 0 , \;\; x \in \Omega_{\lambda}
\end{equation}
for any $\lambda\in \left(\lambda_{0},\lambda_{0}+\varepsilon\right)$, which contradicts the definition of $\lambda_{0}$.
Since both $U_{\lambda_0}(x)$ and $V_{\lambda_0}(x)$ are not identically zero on $\Omega_{\lambda_{0}}$, we utilize the similar argument as in the proof of Theorem \ref{Th1} yields
\begin{equation}\label{b-6}
U_{\lambda_0}(x),\,V_{\lambda_0}(x) > 0 , \;\; x \in \Omega_{\lambda_0}.
\end{equation}
It follows from \eqref{b-6} that
\begin{equation*}
  U_{\lambda_{0}}(x),\,V_{\lambda_{0}}(x) \geq C_{\delta}>0 , \;\; x \in \Omega_{\lambda_{0}-\delta}.
\end{equation*}
Thus, by the continuity of $U_{\lambda}(x)$ and $V_{\lambda}(x)$ with respect to $\lambda$, there exists a positive constant $\varepsilon$ such that
\begin{equation*}
  U_{\lambda}(x),\,V_{\lambda}(x) \geq 0 , \;\; x \in \Omega_{\lambda_{0}-\delta}
\end{equation*}
for any $\lambda\in \left(\lambda_{0},\lambda_{0}+\varepsilon\right)$. Selecting $\Omega_{\lambda}\backslash\Omega_{\lambda_{0}-\delta}$ as a narrow region, then \eqref{b-5} holds for sufficiently small $\delta$ and $\varepsilon$ by Theorem \ref{NRP}\,. Hence, the assertion \eqref{b-4} is proved.

Finally, we conclude that the positive solution pair $u$ and $v$ are radially symmetric and monotone decreasing about the origin due to $x_1$ direction can be chosen arbitrarily. This completes the proof of the Theorem \ref{Th2} .
\end{proof}

\section{Monotonicity and Nonexistence of Positive Solutions}\label{section4}
In this section, applying the direct method of moving planes to prove Theorem \ref{Th3} and \ref{Th4}\,, we show that the monotonicity in an unbounded parabolic domain and the nonexistence on the half space for positive solutions to \eqref{model1}, respectively. We proceed by proving Theorem \ref{Th3}\,.
\begin{proof}[Proof of Theorem \ref{Th3}]
A direct calculation shows that the coefficients in \eqref{b-2} and \eqref{b-3} are replaced by
\begin{eqnarray*}
% \nonumber to remove numbering (before each equation)
  C_{1}(x) &=& a(x')(p-1)\xi^{p-2}-\frac{f(u_{\lambda}(x),v(x))-f(u(x),v(x))}{u_{\lambda}(x)-u(x)}, \\
  C_{2}(x) &=& -\frac{f(u_{\lambda}(x),v_{\lambda}(x))-f(u_{\lambda}(x),v(x))}{v_{\lambda}(x)-v(x)}, \\
  C_{3}(x) &=& -\frac{g(u_{\lambda}(x),v_{\lambda}(x))-g(u(x),v_{\lambda}(x))}{u_{\lambda}(x)-u(x)}, \\
  C_{4}(x) &=& b(x')(p-1)\eta^{p-2}-\frac{g(u(x),v_{\lambda}(x))-g(u(x),v(x))}{v_{\lambda}(x)-v(x)}, \\
\end{eqnarray*}
for $U_{\lambda}(x),\,V_{\lambda}(x)\neq0$, where $\xi$ is between
$u(x)$ and $u_{\lambda}(x)$, $\eta$ is between $v(x)$ and
$v_{\lambda}(x)$. By virtue of the assumptions in Theorem
\ref{Th3}\,, we can apply Theorem \ref{NRP} to deduce that
\begin{equation}\label{om-1}
  U_{\lambda}(x),\, V_{\lambda}(x)\geq0, \quad x\in\widehat{\Omega}_{\lambda}
\end{equation}
for $\lambda>0$ sufficiently closing to $0$, where
$\widehat{\Omega}_{\lambda} :=\{x \in \Omega\mid \, x_N<\lambda\}$ and $x^{\lambda}:=(x',2\lambda-x_{N})$.

We continue to move the plane $\widehat{T}_{\lambda}:=\{x \in \Omega\mid \, x_N=\lambda \mbox{ for some } \lambda\in \mathbb{R}_{+}\}$ to the right along the $x_{N}$-axis as long as \eqref{om-1} holds to its limiting position. To be more precise, let
 \begin{equation*}
 \lambda_0 := \sup \{ \lambda>0 \mid U_\mu (x) \geq 0,\, V_\mu (x) \geq 0, \; x \in \widehat{\Omega}_\mu, \; \mu \leq \lambda \}.
 \end{equation*}
We now argue the assertion that
\begin{equation}\label{om-2}
  \lambda_{0}=+\infty.
\end{equation}

Otherwise, if $ \lambda_{0}<+\infty$, we claim that
\begin{equation}\label{om-3}
U_{\lambda_0}(x)= V_{\lambda_0}(x)  \equiv 0 , \;\; x \in \widehat{\Omega}_{\lambda_0}.
\end{equation}
In analogy with the proof of Theorem \ref{Th1}\,, we can derive either \eqref{om-3} or
\begin{equation}\label{om-4}
U_{\lambda_0}(x),\,V_{\lambda_0}(x)  >0  , \;\; x \in \widehat{\Omega}_{\lambda_0}
\end{equation}
holds. If \eqref{om-4} is true, then we will prove that the plane can still move further such that \eqref{om-1} holds. To be more precise, there exists $\varepsilon>0$ such that
\begin{equation}\label{om-5}
  U_{\lambda}(x),\,V_{\lambda}(x) \geq 0 , \;\; x \in \widehat{\Omega}_{\lambda}
\end{equation}
for any $\lambda\in \left(\lambda_{0},\lambda_{0}+\varepsilon\right)$, which contradicts the definition of $\lambda_{0}$.
It follows from \eqref{om-4} that
\begin{equation*}
  U_{\lambda_{0}}(x),\,V_{\lambda_{0}}(x) \geq C_{\delta}>0 , \;\; x \in \widehat{\Omega}_{\lambda_{0}-\delta}
\end{equation*}
for $0<\delta<\lambda_{0}$.
Thus, by the continuity of $U_{\lambda}(x)$ and $V_{\lambda}(x)$ with respect to $\lambda$, there exists a positive constant $\varepsilon$ such that
\begin{equation*}
  U_{\lambda}(x),\,V_{\lambda}(x) \geq 0 , \;\; x \in \widehat{\Omega}_{\lambda_{0}-\delta}
\end{equation*}
for any $\lambda\in \left(\lambda_{0},\lambda_{0}+\varepsilon\right)$. We specify $\widehat{\Omega}_{\lambda}\backslash\widehat{\Omega}_{\lambda_{0}-\delta}$ as a narrow region, then \eqref{om-5} holds for sufficiently small $\delta$ and $\varepsilon$ by Theorem \ref{NRP}\,. Hence, the aforementioned contradiction concludes that \eqref{om-3} is valid.

We mention that \eqref{om-3} implies
\begin{equation*}
  u(x_{1},x_{2},...,x_{N-1},2\lambda_{0})=u(x_{1},x_{2},...,x_{N-1},0)=0
\end{equation*}
and
\begin{equation*}
  v(x_{1},x_{2},...,x_{N-1},2\lambda_{0})=v(x_{1},x_{2},...,x_{N-1},0)=0,
\end{equation*}
which are contradictive with the fact that $u,\,v>0$ on $\Omega$,
then the assertion \eqref{om-2} holds. Therefore, $u$ and $v$ are
strictly increasing with respect to the $x_{N}$-axis, which
completes the proof of the Theorem \ref{Th3} .
\end{proof}

In the sequel, it remains to be proved Theorem \ref{Th4}\,.
\begin{proof}[Proof of Theorem \ref{Th4}] We start by proving the assertion that
\begin{equation}\label{R+1}
  \mbox{either} \,\, u(x),\, v(x)>0 \,\, \mbox{or}\,\, u(x),\, v(x)\equiv0 \,\, \mbox{in} \,\, \mathbb{R}_{+}^{N}.
\end{equation}
We first show that if there exists $x_{0}\in\mathbb{R}_{+}^{N}$ such that $u(x_{0})=0$, then
\begin{equation}\label{R+2}
  u(x),\, v(x)\equiv0 \,\, \mbox{in} \,\, \mathbb{R}_{+}^{N}.
\end{equation}
If $u(x)\not\equiv0$, on one hand
\begin{eqnarray*}
% \nonumber to remove numbering (before each equation)
  (-\Delta)_p^s u(x_{0}) &=& C_{N,sp} PV \int_{\mathbb{R}^N} \frac{|u(x_{0})-u(y)|^{p-2}[u(x_{0})-u(y)]}{|x_{0}-y|^{N+sp}} dy\\
   &=&C_{N,sp} PV \int_{\mathbb{R}_{+}^N} \frac{-|u(y)|^{p-2}u(y)}{|x_{0}-y|^{N+sp}} dy  \\
  &<&  0.
\end{eqnarray*}
On the other hand, it follows from \eqref{model1-R+N}, \eqref{R+-fg} and \eqref{f} that
\begin{equation*}
% \nonumber to remove numbering (before each equation)
  (-\Delta)_p^s u(x_{0})  = f(0,v(x_{0})) \geq f(0,0)=0.
\end{equation*}
This contradiction implies that $u(x)\equiv0$ in $\mathbb{R}_{+}^{N}$. Then we have $f(0,v)=0$, which is ensured by $u(x)\equiv0$ and the first equation in \eqref{model1-R+N}. Now using \eqref{R+-fg} and \eqref{f} again, we can deduce that $v(x)\equiv0$ on $\mathbb{R}_{+}^{N}$. Indeed, the similar argument as in the proof of \eqref{R+2} yields if $v(x)$ attains zero at a point in $\mathbb{R}_{+}^{N}$, then
$u(x),\, v(x)\equiv0$ in $\mathbb{R}_{+}^{N}$. Hence, the assertion \eqref{R+1} holds.

Now we prove Theorem \ref{Th4} by contradiction. In the sequel, we
always assume that
\begin{equation}\label{R+3}
  u(x),\, v(x)>0 \,\, \mbox{in} \,\, \mathbb{R}_{+}^{N}.
\end{equation}
Adopting the notations
 $$T'_{\lambda} :=\{x \in \mathbb{R}_{+}^{N}\mid x_N=\lambda \mbox{ for some } \lambda\in \mathbb{R}_{+}\},$$
  $$\Sigma'_{\lambda} :=\{x \in \mathbb{R}_{+}^{N} \mid x_N<\lambda\},$$
 and denoting the reflection of $x$ about the moving plane $T'_{\lambda}$ by $ x^{\lambda} :=(x_1, x_2, ..., 2\lambda-x_N)$.
We proceed in two steps and first argue
\begin{equation}\label{R+4}
  U_{\lambda}(x),\, V_{\lambda}(x)\geq0, \quad x\in\Sigma'_{\lambda}
\end{equation}
is valid for $\lambda>0$ sufficiently closing to $0$. A combination of \eqref{R+3} and \eqref{decay-uv} yields that
\begin{equation}\label{R+5}
   \underset{|x| \rightarrow \infty}{\underline{\lim}}U_{\lambda}(x),\,\underset{|x| \rightarrow \infty}{\underline{\lim}}V_{\lambda}(x)\geq0.
\end{equation}
Thus, we conclude the assertion \eqref{R+4} by Theorem \ref{NRP}\,.

Next, we continue to move the plane $T'_{\lambda}$ to the right along the $x_{N}$-axis until its limiting position as long as \eqref{R+4} holds. More precisely, let
 \begin{equation*}
 \lambda_0 := \sup \{ \lambda>0 \mid U_\mu (x) \geq 0,\, V_\mu (x) \geq 0, \; x \in \Sigma'_\mu, \; \mu \leq \lambda \}.
 \end{equation*}
We show that
\begin{equation}\label{R+6}
  \lambda_{0}=+\infty.
\end{equation}
Otherwise, if $\lambda_{0}<+\infty$, combining \eqref{R+5} with the similar argument as in the proof of \eqref{om-3},
 we can deduce
$$U_{\lambda_0}(x)= V_{\lambda_0}(x)  \equiv 0 , \;\; x \in \Sigma'_{\lambda_0},$$
It reveals that
\begin{equation*}
  u(x_{1},x_{2},...,x_{N-1},2\lambda_{0})=u(x_{1},x_{2},...,x_{N-1},0)=0
\end{equation*}
and
\begin{equation*}
  v(x_{1},x_{2},...,x_{N-1},2\lambda_{0})=v(x_{1},x_{2},...,x_{N-1},0)=0,
\end{equation*}
which are contradictive with the assumption \eqref{R+3}, then \eqref{R+6} holds.

Hence, $u$ and $v$ are increasing with respect to the $x_{N}$-axis. In terms of \eqref{decay-uv}, we know that this is impossible, and then  $u(x),\, v(x)\equiv0$ in  $\mathbb{R}^{N}$. This completes the proof of Theorem \ref{Th4}\,.
\end{proof}

\section*{Acknowledgments} This work is supported by the National Natural Science Foundation of China (NNSF Grant No. 11671414 and No. 11771218). The authors would like to deeply thank to Professor W. Chen for introducing this topic and providing many constructive comments.

\bibliography{bibliography}

\end{document}